\newtheorem{thm}{Theorem}[section]
 \newtheorem{cor}[thm]{Corollary}
 \newtheorem{lem}[thm]{Lemma}
 \newtheorem{prop}[thm]{Proposition}
 \theoremstyle{definition}
  \newtheorem{ex}[thm]{Example}
 \newtheorem{rem}[thm]{Remark}
  \newtheorem{rems}[thm]{Remarks}
 \numberwithin{equation}{section}
\DeclareMathOperator{\End}{End}
\DeclareMathOperator{\Hom}{Hom}
\DeclareMathOperator{\Ker}{Ker}
\DeclareMathOperator{\im}{Im}
\DeclareMathOperator{\RMod}{R{\rm -Mod}}
\DeclareMathOperator{\ModR}{{\rm Mod-}R}
\begin{document}
\author{ Pedro A. Guil Asensio, Truong Cong Quynh and Ashish K. Srivastava}
\thanks{First author has been partially supported by the DGI (MTM2010-20940-C02-02). Part of
the sources of this grant come from the FEDER funds of the European Union. The second author has been partially founded by the Vietnam National Foundation for Science and Technology Development (NAFOSTED). The work of the third author is partially supported by a grant from Simons Foundation (grant number 426367).}
\address{Departamento de Mathematicas, Universidad de Murcia, Murcia, 30100, Spain}
\email{paguil@um.es}
\address{Department of Mathematics, Danang University, 459 Ton Duc Thang, Danang city, Vietnam}
\email{tcquynh@dce.udn.vn; tcquynh@live.com}
\address{Department of Mathematics and Statistics, St. Louis University, St. Louis, MO-63103, USA}
\email{asrivas3@slu.edu}
\title[Invariance of modules]{Additive unit structure of endomorphism rings and invariance of modules}
\keywords{automorphism-invariant, automorphism-coinvariant modules}
\subjclass[2000]{16D40; 16E50;  16N20}
\begin{abstract}
We use the type theory for rings of operators due to Kaplansky to describe the structure of modules that are invariant under automorphisms of their injective envelopes. Also, we highlight the importance of Boolean rings in the study of such modules. As a consequence of this approach, we are able to further the study initiated by Dickson and Fuller regarding when a module invariant under automorphisms of its injective envelope is invariant under any endomorphism of it. In particular, we find conditions for several classes of noetherian rings which ensure that modules invariant under automorphisms of their injective envelopes are quasi-injective. In the case of a commutative noetherian ring, we show that any automorphism-invariant module is quasi-injective. We also provide multiple examples to show that our conditions are the best possible, in the sense that if we relax them further then there exist automorphism-invariant modules which are not quasi-injective. We finish this paper by dualizing our results to the automorphism-coinvariant case.
\end{abstract}

\maketitle

\bigskip

\section{Introduction and notation.}

\bigskip

\noindent The study of rings additively generated by their units has a long tradition (see \cite{D,Wolfson, Zelinsky}) but in recent years there have been substantial advances in this area (see, for example, \cite{GPS, Vamos, VW}). V\'{a}mos in \cite{Vamos} proved that every element of a right self-injective ring $R$ is a sum of two units if $R$ has no non-zero corner ring which is Boolean. This result was extended later in \cite{KS, KS2} by using the type theory for von Neumann regular right self-injective rings developed by Kaplansky. Historically, the theory of types was first proposed by Murray and von Neumann \cite{MV} but it was developed as a classification scheme by Kaplansky in \cite{Kap} for a certain class of rings of operators which are usually called Baer rings. Since von Neumann regular right self-injective rings are Baer rings, Kaplansky's theory is applicable to them. Following this approach, it has been shown in \cite{KS, KS2} that if $R$ is a right self-injective ring such that $R$ has no homomorphic image isomorphic to the field of two elements $\mathbb F_2$, then each element of $R$ is a sum of two units; and that if $R$ has a homomorphic image isomorphic to $\mathbb F_2$ but no homomorphic image isomorphic to $\mathbb F_2 \times \mathbb F_2$ then each element of $R$ is a sum of two or three units.

It has been recently shown in \cite{GKS, GS1}, that the above results can be successfully applied to the classical problem of knowing when a module which is invariant under automorphisms of its injective envelope is, indeed, invariant under any endomorphism of it. The reason is that, if any endomorphism of this injective envelope is a sum of automorphisms, then the invariance of the module under automorphisms of it automatically implies its invariance under any endomorphism.

Let us recall that the study of modules that are invariant under automorphisms of their injective envelopes was initiated in the late sixties by Dickson and Fuller \cite{DF} for the particular case of finite-dimensional algebras over a field. But it has been in the last few years when this notion has been extensively studied over general rings and modules. These modules have been called in the literature automorphism-invariant modules (see \cite{AFT, ESS, GKBS, GS1, GS2, GS3, LZ, SS2}).

On the other hand, Johnson and Wong \cite{JW} proved that a module $M$ is invariant under endomorphisms of its injective envelope if and only if any homomorphism from a submodule $N$ of $M$ to $M$ extends to an endomorphism of $M$. These modules are called quasi-injective. Motivated by this result, Jain and Singh introduced in \cite{JS} pseudo-injective modules as those modules $M$ for which any monomorphism from a submodule of $M$ to $M$ extends to an endomorphism of $M$. And an analogous result to the Johnson and Wong's characterization of quasi-injective modules has been recently proved in \cite{ESS} and \cite{GKBS}, where it is shown that a module $M$ is automorphism-invariant if and only if it is pseudo-injective.

Clearly, any quasi-injective module is automorphism-invariant, but there are several examples of automorphism-invariant modules that are not quasi-injective (see e.g. \cite{ESS, JS, Teply}). It is then natural to ask when an automorphism-invariant module is quasi-injective. It has been shown in \cite{GS2} that if $M$ is a right $R$-module such that $\End_R(M)$ has no homomorphic image isomorphic to $\mathbb{F}_2$, then $M$ is quasi-injective if and only it is automorphism-invariant. In particular, this is the case when $M$ is a module over an $F$-algebra, where $F$ is a field  with more than two elements; thus extending the previously mentioned result of Dickson and Fuller for indecomposable modules \cite{DF}.

This problem of characterizing when an automorphism-invariant module is quasi-injective has been recently continued in \cite{AFT}, where the authors give different partial characterizations of when an automorphism-invariant module $M$ of finite Goldie dimension is quasi-injective and they link this question to the theory of Boolean rings. In fact, V\'{a}mos was the first to highlight the importance of Boolean rings in understanding the additive unit structure of endomorphism rings of injective modules in \cite{Vamos}. The main objective of this paper is to connect these results in \cite{AFT, Vamos} with the fruitful techniques introduced in \cite{GKS} to understand the structure of automorphism-invariant modules in terms of the endomorphism ring of their injective envelopes modulo their Jacobson radical. This allows us to get critical test theorems for automorphism-invariant modules which are possible candidates to be non quasi-injective. We prove then new results that extend, complete and contextualize the different characterizations initiated in \cite{AFT}. We also provide several examples which complement these results, as well as outline the limits of their possible extensions. For example, these examples show that even an indecomposable automorphism invariant finitely generated right module over a finite-dimensional algebra over a field $F$ does not need to be quasi-injective, thus answering in the negative a question posed by Facchini.

Recall that a general theory of modules which are invariant (resp., coinvariant) under automorphisms of their envelopes (resp., covers) has been developed in \cite{GKS}. This theory includes, in particular, modules which are invariant under automorphisms of their injective, pure-injective or cotorsion envelopes, as well as of modules which are coinvariant under automorphisms of their projective or flat covers. We develop our results under this much more general setting and then we apply them to the classical situation of modules invariant (resp., coinvariant) under automorphisms of their injective envelopes (resp., projective covers).

We begin by extending in Section 2 several key observations from \cite{GKS} about the additive unit structure of a right self-injective von Neumann regular ring. These observations will be critical to obtain in Section 3 our characterizations of when an $\mathcal X$-automorphism invariant module is $\mathcal X$-endomorphism invariant. These results are then applied in Section 4 to study when automorphism-invariant modules over certain classes of noetherian or artinian rings are quasi-injective. For example, we characterize right bounded right noetherian rings over which automorphism-invariant modules are quasi-injective. We first show that if $R$ is a commutative noetherian ring such that $R$ has no homomorphic image isomorphic to $\mathbb F_2\times \mathbb F_2$, then any automorphism-invariant $R$-module is quasi-injective and then finally as a consequence, we show that an automorphism-invariant module over any commutative noetherian ring is quasi-injective. Several examples are also included to show that we cannot expect to relax these restrictions any further. We close the paper by dualizing in Section 5 these results for automorphism-coinvariant modules.

Throughout this paper, all rings will be associative rings with identity and our modules will be unitary right modules unless otherwise is stated. We will denote by $J(R)$ the Jacobson radical of a ring $R$ and by $|X|$, the cardinality of a set $X$. A module $M$ is called {\it square-free} if $M$ does not contain a nonzero submodule $N$ isomorphic to $X\oplus X$ for some module $X$. And a ring $R$ is called a Boolean ring if each element of $R$ is an idempotent. We will say that a ring $R$ is a {\it semiboolean ring} if $R/J(R)$ is a Boolean ring; and we will say that a module $M$ is {\it semiboolean} if its endomorphism ring is a semiboolean ring. Note that it has been proved in \cite{GKS} that if $M$ is an $\mathcal X$-automorphism invariant (resp., $\mathcal X$-automorphism coinvariant) module with a monomorphic $\mathcal X$-envelope $u:M\rightarrow X$ (resp., epimorphic cover $p:X\rightarrow M$) such that $\End(X)/J(\End(X))$ is a von Neumann regular right self-injective ring and idempotents lift modulo $J(\End(X))$, then idempotents in $\End_R(M)/J(\End_R(M))$ lift modulo $J(\End(M))$ and therefore, for endomorphism rings of such classes of modules our definition of semiboolean ring coincides with that of \cite{NZ}. We refer to \cite{AF, CLVW, F, MM} for any undefined notion used along the text.

\bigskip

\section{Observations on the structure of von Neumann regular right self-injective rings.}

\bigskip

\noindent We begin with some observations on the structure of von Neumann regular right self-injective rings which will help us in refining  the structure of endomorphism rings of $\mathcal X$-automorphism invariant and  $\mathcal X$-automorphism coinvariant modules. It is well known that any von Neumann regular right self-injective ring $R$ can be decomposed as a product $R=R_1 \times R_2 \times R_3 \times R_4 \times R_5$ where $R_1$ is of type $I_f$, $R_2$ is of type $I_\infty$, $R_3$ is of type $II_f$, $R_4$ is of type $II_\infty$, and $R_5$ is of type $III$. Using ideas that were implicit in \cite{KS}, it was shown in \cite{GKS} that this ring decomposition can be streamlined further and shown that if $R$ is a von Neumann regular right self-injective ring then $R=R_1\times R_2$, where $R_1$ is an abelian regular self-injective ring and each element of the ring $R_2$ is a sum of two units. Furthermore, it is known that if $R$ is an abelian regular self-injective ring then $R=R_1\times R_2$, where $R_1$ is a Boolean ring and each element of ring $R_2$ is a sum of two units \cite{KS2}. Thus, combining them together, we obtain:

\begin{thm}
Let $R$ be a von Neumann regular right self-injective ring. Then $R=R_1\times R_2$, where $R_1$ is a Boolean ring and each element of the ring $R_2$ is a sum of two units.
\end{thm}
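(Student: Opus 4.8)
The plan is to assemble the statement directly from the three structural facts recalled in the paragraph immediately preceding the theorem, so that no new ideas are needed beyond careful bookkeeping of the ring decompositions. First I would invoke the classical type decomposition: any von Neumann regular right self-injective ring $R$ splits as a product $R = S_1 \times S_2 \times S_3 \times S_4 \times S_5$, where the factors are of types $I_f$, $I_\infty$, $II_f$, $II_\infty$, and $III$, respectively. The key reduction, attributed to \cite{GKS} (building on \cite{KS}), is that this coarser five-term product can be consolidated into $R = A \times B$, where $A$ is an abelian regular self-injective ring and every element of $B$ is a sum of two units. The point is that the type $I_f$ piece is (up to the abelian part) the only source of "small" corner rings obstructing the two-units property, and the non-abelian type $I_f$ contribution together with $S_2, S_3, S_4, S_5$ can all be absorbed into the factor $B$ in which every element is a sum of two units.

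Next I would peel apart the abelian factor $A$. By the result of \cite{KS2} quoted in the excerpt, an abelian regular self-injective ring decomposes as $A = A_1 \times A_2$, where $A_1$ is a Boolean ring and each element of $A_2$ is a sum of two units. Substituting this back, $R \cong A_1 \times (A_2 \times B)$. Now I would set $R_1 := A_1$, which is Boolean by construction, and $R_2 := A_2 \times B$. It remains only to observe that the two-units property is closed under finite direct products: if $x = (x', x'') \in A_2 \times B$, write $x' = u' + v'$ with $u', v' \in A_2^\times$ and $x'' = u'' + v''$ with $u'', v'' \in B^\times$; then $x = (u', u'') + (v', v'')$ is a sum of two units of $A_2 \times B$, since the unit group of a product is the product of the unit groups. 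This gives the claimed decomposition $R = R_1 \times R_2$.

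I do not anticipate a genuine obstacle here, since both of the substantive inputs (the \cite{GKS} consolidation of the type decomposition and the \cite{KS2} splitting of the abelian case) are allowed to be cited as established. The only mild subtlety is a matter of presentation: one should be careful that "each element is a sum of two units" is genuinely preserved when one merges the factors $A_2$ and $B$, which is the elementary product-of-unit-groups remark above. If one instead wished to give a more self-contained argument, the real work would lie in re-deriving the \cite{GKS} reduction — i.e., showing that in each non-abelian type the ring contains enough $2\times 2$ (or larger) matrix structure to force every element to be a sum of two units, for instance via the standard trick that in $M_n(S)$ with $n \ge 2$ one can write an arbitrary matrix as a sum of two invertible matrices by a suitable block/row manipulation — but since that reduction is quoted from \cite{GKS}, the proof as planned is just the two-step substitution followed by the trivial merging of factors.
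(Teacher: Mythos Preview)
Your proposal is correct and follows exactly the paper's own approach: the paper does not give a standalone proof of this theorem but simply states (in the paragraph immediately preceding it) that one combines the decomposition $R = A \times B$ from \cite{GKS} with the splitting $A = A_1 \times A_2$ from \cite{KS2}, which is precisely what you do. Your additional remark that the two-units property passes to finite direct products is the only missing glue, and it is trivially correct.
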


Using this theorem, the arguments used in the proof of Theorem 2.4 in \cite{GKS} can be suitably modified to obtain the following theorem. As its proof is similar to that of  \cite[Theorem~2.4]{GKS}, we will only give a brief sketch of it.

\begin{thm} \label{type}
Let $S$ be a von Neumann regular right self-injective ring and $R$, a subring of $S$ which is stable under left multiplication by units of $S$. Then $R$ is a von Neumann regular ring and $R=R_1\times R_2$, where $R_1$ is a Boolean ring and $R_2$ is a von Neumann regular right self-injective ring.
\end{thm}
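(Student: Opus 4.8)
The plan is to reduce everything to the structural decomposition of $S$ provided by the preceding theorem, together with the single observation that the hypotheses force $R$ to contain \emph{every} unit of $S$: indeed $1_S\in R$ and $R$ is stable under left multiplication by units of $S$, so $u=u\cdot 1_S\in R$ for each such $u$. Write $S=S_1\times S_2$ as in the preceding theorem, with $S_1$ Boolean and every element of $S_2$ a sum of two units of $S_2$, and let $e=(1_{S_1},0)$ be the associated central idempotent, so that $eS=S_1$ and $(1-e)S=S_2$.

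First I would show that $\{0\}\times S_2\subseteq R$. Given $s\in S_2$, write $s=a+b$ with $a,b$ units of $S_2$; then $(1_{S_1},a)$ and $(1_{S_1},b)$ are units of $S$, hence belong to $R$, and therefore so does their sum $(1_{S_1}+1_{S_1},\,a+b)=(0,s)$, where I use that $1+1=0$ in the Boolean ring $S_1$. Taking $s=1_{S_2}$ gives $1-e=(0,1_{S_2})\in R$, so the central idempotent $e$ of $S$ lies in $R$ and is central in $R$.

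Now the Peirce decomposition along $e$ yields $R=eR\times(1-e)R$. Since $(1-e)S=S_2\subseteq R$ we get $(1-e)R=(1-e)S=S_2$, which, being a ring direct factor of the von Neumann regular right self-injective ring $S$, is itself von Neumann regular and right self-injective; set $R_2=S_2$. On the other hand $R_1:=eR$ is a subring of the Boolean ring $eS=S_1$, and since every element of a Boolean ring is idempotent, $R_1$ is Boolean. Hence $R=R_1\times R_2$ with $R_1$ Boolean and $R_2$ von Neumann regular right self-injective; and, being a product of two von Neumann regular rings, $R$ is itself von Neumann regular.

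The crux is precisely the realization that the hypotheses force $R$ to absorb a whole von Neumann regular right self-injective ring --- the factor $S_2$ (or, if one prefers not to assume $1_S\in R$, a corner of it) --- after which producing the central idempotent and reading off the decomposition is routine. If one only assumes that $R$ is a ring with some identity $1_R$ (not necessarily equal to $1_S$), the argument still works with $S_2$ replaced by a corner: one checks that $\pi_2(R)$, the projection of $R$ to $S_2$, is stable under left multiplication by units of $S_2$, hence --- again because every element of $S_2$ is a sum of two units --- is a left ideal of $S_2$; being also a unital subring with identity $e_2:=\pi_2(1_R)$, the left-ideal property forces $e_2se_2=se_2$ for all $s\in S_2$, so that $\pi_2(R)=S_2e_2=e_2S_2e_2$ is a full corner ring of $S_2$ and hence again von Neumann regular and right self-injective. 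The same computation as above gives $\{0\}\times\pi_2(R)\subseteq R$, whence $R=\pi_1(R)\times\pi_2(R)$ with $\pi_1(R)$ Boolean; this corner-ring identification is the only genuinely non-formal step, the remainder being componentwise arithmetic in $S_1\times S_2$ and the characteristic-two collapse of the Boolean coordinate.
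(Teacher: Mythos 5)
Your proof is correct and follows essentially the same route as the paper's: decompose $S=S_1\times S_2$ via Theorem 2.1, use the fact that every element of $S_2$ is a sum of two units together with stability under left multiplication by units (and $1_S\in R$) to get $\{0\}\times S_2\subseteq R$, and then read off $R=R_1\times R_2$ with $R_1\subseteq S_1$ Boolean and $R_2=S_2$; your use of the central idempotent $e=(1_{S_1},0)$ is only a cosmetic repackaging of the paper's direct subtraction argument. Your closing remark on the case $1_R\neq 1_S$ goes beyond the paper, which tacitly assumes the subring shares the identity of $S$ (as it does in the intended application), but it does not affect the main argument.
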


\begin{proof}
By the above theorem, we know that $S=S_1\times S_2$, where $S_1$ is a Boolean ring and each element of the ring $S_2$ is a sum of two units. As $R$ is a subring of $S$, we may view any element $a$ of $R$ as $a=a_1\times a_2$ where $a_1\in S_1$ and
$a_2\in S_2$. Since any element $s_2\in S_2$ is the sum of two units, say $s_2=t_2+t'_2$, we may write the element
$0\times s_2 = 1_{S_1}\times t_2 + (-1_{S_2})\times t'_2$
as the sum of two units of $S$ and, as $R$ is stable under left multiplication by units in $S$, this means that $0\times s_2\in R$ for any $s_2\in S_2$. Call $R_2=S_2$ and define
$$R_1=\{s_1\in S_1 : \exists s_2\in S_2 \text{ such that }s_1\times s_2\in R \}.$$

Then any $s_1\times 0$ with $s_1\in R_1$ can be written as $s_1\times 0=s_1\times s_2 - 0\times s_2$ with $s_1\times s_2\in R$. Therefore, $s_1\times 0\in R$ for any $s_1\in R_1$ and we deduce that $R=R_1\times R_2$. This gives a decomposition for $R=R_1\times R_2$ with $R_2=S_2$ and therefore $R_2$ is a von Neumann regular right self-injective ring. As every element of $R_1$ is in $S_1$, it follows that each element of $R_1$ is an idempotent and hence $R_1$ is a Boolean ring.
\end{proof}

\bigskip

\section{When is an $\mathcal X$-automorphism invariant module $\mathcal X$-endomorphism invariant.}

\noindent We begin this section by recalling some notation and definitions from \cite{GKS} that we will need along this paper.  Recall that a class $\mathcal{X}$ of right modules closed under isomorphisms is called an {\em enveloping class} if for any module $M$ there exists a universal homomorphism with respect to $\mathcal X$, $u:M\rightarrow X(M)$, with $X(M)\in\mathcal{X}$, in the sense that any other morphism from $M$ to a module in $\mathcal X$ factors through $u$ and,
moreover, $u$ is minimal in the sense that whenever $u$ has a factorization $u=h\circ u$, then $h$ must be an automorphism. This morphism $u:M\rightarrow X(M)$ is called the {\em $\mathcal X$-envelope} of $M$. And this envelope is called a {\em monomorphic envelope} if, in addition, $u$ is a monomorphism.
The dual notions of covering class, $\mathcal X$-cover and epimorphic $\mathcal X$-cover are obtained by reversing arrows.
Then, a module $M$ having an $\mathcal X$-envelope (resp., $\mathcal X$-cover)
is called $\mathcal X$-automorphism invariant (resp., $\mathcal X$-automorphism coinvariant) when it is invariant under any automorphism of its $\mathcal X$-envelope (resp., $\mathcal X$-cover). And $M$ is called $\mathcal X$-endomorphism invariant (resp., $\mathcal X$-endomorphism coinvariant) when it is invariant under any endomorphism of its $\mathcal X$-envelope (resp., $\mathcal X$-cover) (see \cite{GKS}). Finally, when $\mathcal X$ is the class of injective modules, we will simply talk of automorphism-invariant and quasi-injective modules, instead of $\mathcal{X}$-automorphism invariant and $\mathcal X$-endomorphism invariant modules. And if $\mathcal X$ is the class of projective modules, we will talk of automorphism coinvariant and quasi-projective modules instead of $\mathcal X$-automorphism coinvariant and $\mathcal X$-endomorphism coinvariant modules.

 It is clear from the definition that any $\mathcal X$-endomorphism invariant module is $\mathcal X$-automorphism invariant. We give below some examples that show that the converse is no longer true, even for indecomposable $\mathcal X$-automorphism invariant modules having finite Goldie dimension.

\begin{ex}\label{noqi} $($\cite{SS2}$)$
 Let $R=\left[
\begin{array}{ccc}
\mathbb F_2 & \mathbb F_2 & \mathbb F_2 \\
0 & \mathbb F_2 & 0 \\
0 & 0 & \mathbb F_2  \\
\end{array}
\right] $ where $\mathbb F_2$ is the field of two elements.

Take $M=\left[
\begin{array}{ccc}
\mathbb F_2 & \mathbb F_2 & \mathbb F_2 \\
0 & 0 & 0 \\
0 & 0 & 0  \\
\end{array}
\right] $. As $M= e_{11}R$, where $e_{11}$ is a primitive idempotent, $M$ is an indecomposable right $R$-module. Since $R$ is a finite-dimensional $\mathbb F_2$-algebra, $M$ is an artinian right $R$-module and hence it has finite Goldie dimension. Note that $M$ has two simple submodules $S_1=e_{12}R=\left[
\begin{array}{ccc}
0 & \mathbb F_2 & 0 \\
0 & 0 & 0 \\
0 & 0 &0  \\
\end{array}
\right] $ and $S_2=e_{13}R=\left[
\begin{array}{ccc}
0 & 0 & \mathbb F_2 \\
0 & 0 & 0 \\
0 & 0 & 0  \\
\end{array}
\right]$. Let $\mathcal X$ be the class of injective right $R$-modules. It may be checked that the only automorphism of the injective envelope of $M$ is the identity. Therefore, $M$ is automorphism invariant. But clearly $M$ is not quasi-injective as it is not uniform. Thus, we have an example of an indecomposable module with finite Goldie dimension which is automorphism invariant but not quasi-injective.
\end{ex}

\begin{ex}\label{noqi2} (Teply, see \cite{JS})
Let $A=\mathbb F_2[x]$ and $$R=\left[
\begin{array}{cc}
A/(x) & 0 \\
A/(x) & A/(x^2) \\
\end{array}
\right] $$

Let $M=\left[
\begin{array}{cc}
0 & 0 \\
A/(x) & A/(x^2) \\
\end{array}
\right]$. As $M=e_{22}R$, where $e_{22}$ is a primitive idempotent, $M$ is an indecomposable right $R$-module. Note that $M$ has two simple submodules $S_1=\left[
\begin{array}{cc}
0 & 0 \\
A/(x) & 0 \\
\end{array}
\right]$ and $S_2=\left[
\begin{array}{cc}
0 & 0 \\
0 & (x)/(x^2) \\
\end{array}
\right]$ such that $S_1\oplus S_2$ is essential in $M$.

Clearly, $R$ is a finite-dimensional $\mathbb{F}_2$-algebra. Let $\mathcal X$ be the class of injective right $R$-modules. Teply proved in \cite{JS} that $M$ is automorphism invariant. But $M$ is not quasi-injective as $M$ is not uniform. This gives another example of an indecomposable module with finite Goldie dimension which is automorphism invariant but not quasi-injective.
\end{ex}

Let $\mathcal{X}$ be an enveloping (resp., covering) class of right $R$-modules. If $u:M\rightarrow X$ is a monomorphic $\mathcal{X}$-envelope (resp., $p:X\rightarrow M$ is an epimorphic cover) of a module $M_R$ which is $\mathcal X$-automorphism invariant (resp., coinvariant) and $\End(X)/J(\End(X))$ is a von Neumann regular right self-injective ring and idempotents lift modulo $J(\End(X))$ then, as shown in \cite{GKS}, there exists an injective
ring homomorphism
$$
\Psi: \End(M)/J(\End(M))\longrightarrow \End(X)/J(\End(X))
$$
given by the
rule $\Psi(f+J(\End(M)))=g+J(\End(X))$ where $g\in \End(X)$ such that $g\circ u=u\circ f$ (resp., $p\circ g=f\circ p$). This allows us to identify $\End(M)/J(\End(M))$ with the subring $\im
\Psi\subseteq \End(X)/J(\End(X)).$

In view of Theorem \ref{type}, the decomposition theorem for $\mathcal{X}$-automorphism invariant (resp., $\mathcal{X}$-automorphism coinvariant) modules obtained in \cite[Theorem 3.12, Theorem 4.9]{GKS} may be modified and stated as follows.

\begin{thm} \label{struct}
Let $\mathcal{X}$ be an enveloping (resp., covering) class of right $R$-modules. Let $u:M\rightarrow X$ be a monomorphic $\mathcal{X}$-envelope (resp., $p:X\rightarrow M$ is an epimorphic cover) of a module $M_R$ such that $M$ is $\mathcal{X}$-automorphism invariant (resp., $\mathcal{X}$-automorphism coinvariant) and $\End(X)/J(\End(X))$ is a von Neumann regular right self-injective ring and idempotents lift modulo $J(\End(X))$.

Then $\End(M)/J(\End(M))$ is also a von Neumann regular ring and idempotents in $\End(M)/J(\End(M))$ lift to idempotents in $\End(M)$.

\noindent Moreover, $M$ admits a decomposition $M=N\oplus L$ such that:
\begin{enumerate}
\item[(i)] $N$ is a semiboolean module.
\item[(ii)] $L$ is $\mathcal X$-endomorphism invariant (resp., $\mathcal X$-endomorphism coinvariant).
\end{enumerate}
In particular, $\End(M)/J(\End(M))$ is the direct product of a Boolean ring and a right self-injective von Neumann regular ring.
\end{thm}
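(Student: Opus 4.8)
The plan is to run the argument of \cite[Theorem~3.12, Theorem~4.9]{GKS} with Theorem~\ref{type} replacing the coarser ``abelian regular'' decomposition used there. As recalled just before the statement, the hypotheses produce an injective ring homomorphism $\Psi\colon \End(M)/J(\End(M))\hookrightarrow \End(X)/J(\End(X))=:T$, through which I identify $\bar S:=\End(M)/J(\End(M))$ with a subring of $T$. The point at which $\mathcal X$-automorphism invariance enters, and which is proved in \cite{GKS}, is that $\bar S$ is stable under left multiplication by units of $T$: every automorphism of $X$ restricts (resp., drops) to an automorphism of $M$ and, conversely, every unit of $T$ is the image of an automorphism of $X$, since a preimage $u$ of a unit admits a $v$ with $uv,vu\in 1+J(\End(X))\subseteq U(\End(X))$, whence $u$ is a unit. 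Now $T$ is von Neumann regular right self-injective, so Theorem~\ref{type} applies and gives $\bar S=\bar S_1\times \bar S_2$ with $\bar S_1$ Boolean and $\bar S_2$ von Neumann regular right self-injective; moreover, by the proof of Theorem~\ref{type}, $\bar S_2$ is exactly the ``sum of two units'' factor $T_2$ of the decomposition $T=T_1\times T_2$ obtained by applying Theorem~\ref{type} to $T$ itself. This already yields that $\End(M)/J(\End(M))$ is von Neumann regular and is the direct product of a Boolean ring and a right self-injective von Neumann regular ring --- the final assertion of the theorem --- and, combined with the lifting result quoted from \cite{GKS} in the introduction, that idempotents of $\bar S$ lift to idempotents of $\End(M)$.

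For the module decomposition I would take $\bar e\in\bar S$ to be the central idempotent with $\bar S\bar e=\bar S_1$; its image $\Psi(\bar e)$ is the central idempotent $\bar c$ of $T$ defining $T_1$, and $\bar c\in\bar S$ because $1_{T_1}\in\bar S_1$ by the proof of Theorem~\ref{type}. Lift $\bar e$ to an idempotent $e\in\End(M)$ and set $N=eM$, $L=(1-e)M$, so $M=N\oplus L$. The corner-ring identities $\End(eM)\cong e\End(M)e$ and $J(e\End(M)e)=e\,J(\End(M))\,e$ (and likewise for $1-e$), together with the centrality of $\bar e$ in $\bar S$, give $\End(N)/J(\End(N))\cong \bar S\bar e=\bar S_1$ and $\End(L)/J(\End(L))\cong \bar S(1-\bar e)=\bar S_2$. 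Since $\bar S_1$ is Boolean, $N$ is semiboolean, which is (i).

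For (ii) I would first identify the $\mathcal X$-envelope of $L$. Choosing $g\in\End(X)$ with $g\circ u=u\circ e$ (so $\bar g=\Psi(\bar e)=\bar c$), the computation $g^2\circ u=g\circ(u\circ e)=(g\circ u)\circ e=u\circ e=g\circ u$ forces $g^2=g$ since $u$ is monic; thus $g$ is an idempotent lift of the central idempotent $\bar c$, $u$ carries $N$ into $gX$ and $L$ into $(1-g)X$, and $X=gX\oplus(1-g)X$ splits $u$ accordingly. As in \cite{GKS}, the summand $(1-g)X\in\mathcal X$ is then a monomorphic $\mathcal X$-envelope (resp., epimorphic cover) of $L$, and $\End((1-g)X)/J(\End((1-g)X))\cong(1-\bar g)T(1-\bar g)=T_2$, in which every element is a sum of two units. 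A short lifting step now upgrades this: given $\phi\in\End((1-g)X)$, write $\bar\phi=\bar v_1+\bar v_2$ with the $\bar v_i$ units of $T_2$; any preimages $w_1,w_2$ of $\bar v_1,\bar v_2$ are units (their images are units and $1+J$ consists of units), and replacing $w_2$ by $\phi-w_1$ exhibits $\phi$ as a sum of two automorphisms of $(1-g)X$. Since $L$, as a direct summand of the $\mathcal X$-automorphism invariant module $M$, is invariant under all automorphisms of $(1-g)X$, it is invariant under all endomorphisms of $(1-g)X$; that is, $L$ is $\mathcal X$-endomorphism invariant, giving (ii). The dual covering/coinvariant case follows by reversing arrows throughout.

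The step I expect to be the main obstacle is the compatibility bookkeeping in the last paragraph: one must make sure that the purely ring-theoretic factor $\bar S_2$ produced by Theorem~\ref{type} is genuinely realized as $\End((1-g)X)/J(\End((1-g)X))$ for an honest $\mathcal X$-envelope $(1-g)X$ of $L$ --- equivalently, that the idempotent $g$ extending $e$ does lift the \emph{central} idempotent $\bar c$ of $T$ --- since it is precisely this identification that guarantees $(1-g)X$ carries no ``Boolean part'' and hence that each of its endomorphisms is a sum of automorphisms. Everything else reduces to corner-ring computations and the elementary fact that units lift modulo a radical.
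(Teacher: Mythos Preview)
Your overall strategy is exactly the one the paper indicates: the paper gives no detailed proof, merely asserting that the decomposition of \cite[Theorems~3.12 and 4.9]{GKS} ``may be modified'' by invoking Theorem~\ref{type} in place of the abelian-regular splitting. Your write-up fleshes this out faithfully, and the ring-theoretic portion (applying Theorem~\ref{type} to the unit-stable subring $\bar S\subseteq T$, reading off the Boolean $\times$ self-injective decomposition, lifting the central idempotent of $\bar S$) is correct and matches what the paper intends.

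There is, however, a genuine slip in your construction of the idempotent $g\in\End(X)$. You choose $g$ with $g\circ u=u\circ e$, compute $g^2\circ u=g\circ u$, and conclude $g^2=g$ ``since $u$ is monic.'' But monomorphisms are \emph{left}-cancellable, not right-cancellable: from $(g^2-g)\circ u=0$ you only get that $g^2-g$ vanishes on $u(M)$, which need not be all of $X$. The fix is the one already available from the hypotheses: lift the central idempotent $\bar c\in T$ directly to an idempotent $c\in\End(X)$ (idempotents lift modulo $J(\End(X))$ by assumption), and then argue that $cX$ and $(1-c)X$ serve as $\mathcal X$-envelopes of $N$ and $L$. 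This is how \cite{GKS} proceeds, and it sidesteps the cancellation issue entirely; the compatibility check you flag as the ``main obstacle'' then reduces to showing that $e$ and $c$ induce the same splitting of $M$ inside $X$, which follows because $\Psi(\bar e)=\bar c$ and both idempotents are determined modulo the radical. With that correction your argument goes through and coincides with the paper's intended proof.
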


Note that the above theorem is new even when $\mathcal X$ is the class of injective modules. In this case, it properly extends \cite[Proposition 3.15]{AFT} as in that proposition, the above decomposition has been obtained for a square-free automorphism-invariant module of finite Goldie dimension.

Using the additive unit structure of von Neumann regular right self-injective rings, a condition is given in \cite{GKS} under which $\mathcal X$-automorphism invariant modules are $\mathcal X$-endomorphism invariant.

\begin{thm} \label{noF2}\cite{GKS}
Let $M_R$ be an $\mathcal X$-automorphism invariant module with a monomorphic $\mathcal X$-envelope $u:M\rightarrow X$ such that $\End(X)/J(\End(X))$ is von Neumann regular right self-injective and idempotents lift modulo $J(\End(X))$. If $\End(M)$ has no homomorphic image isomorphic to $\mathbb F_2$, then $M$ is $\mathcal X$-endomorphism invariant.
\end{thm}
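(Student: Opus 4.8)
The plan is to carry out the whole argument inside the von Neumann regular right self-injective ring $S:=\End(X)/J(\End(X))$, identifying $\End(M)/J(\End(M))$ with the subring $\mathcal B:=\im\Psi\subseteq S$ as recalled above; recall that $\mathcal B$ is stable under left multiplication by the units of $S$. The starting point is that $\mathcal B$ contains \emph{every} unit of $S$. Indeed, a unit of $S$ lifts to a unit of $\End(X)$ (units always lift modulo the Jacobson radical), that is, to an automorphism $\phi$ of $X$; since $M$ is $\mathcal X$-automorphism invariant we have $\phi(u(M))=u(M)$, so $\phi$ restricts to an automorphism $f$ of $M$ with $\phi\circ u=u\circ f$, and then $\Psi(\overline f)=\overline\phi$ by the very definition of $\Psi$. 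Hence $\overline\phi\in\mathcal B$.

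Next I would translate the hypothesis and apply Theorem \ref{type}. A ring surjection $\End(M)\twoheadrightarrow\mathbb F_2$ has kernel a maximal two-sided ideal, which contains $J(\End(M))$, so it factors through $\End(M)/J(\End(M))\cong\mathcal B$; thus the hypothesis is equivalent to $\mathcal B$ having no homomorphic image isomorphic to $\mathbb F_2$. Writing $S=S_1\times S_2$ as in Theorem 2.1, with $S_1$ Boolean and every element of $S_2$ a sum of two units, Theorem \ref{type} applied to $\mathcal B\subseteq S$ gives $\mathcal B=\mathcal B_1\times\mathcal B_2$ with $\mathcal B_1$ Boolean and, by the explicit construction in its proof, $\mathcal B_2=S_2$. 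A nonzero Boolean ring surjects onto $\mathbb F_2$ (pass to a maximal ideal), so $\mathcal B_1$, being a ring direct factor of $\mathcal B$, must be zero. But $1_S=(1_{S_1},1_{S_2})$ lies in the unital subring $\mathcal B=\mathcal B_1\times S_2$, forcing $1_{S_1}\in\mathcal B_1=0$; hence $S_1=0$ and $S=S_2$, so every element of $S$ is a sum of two units.

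It remains to deduce $\mathcal X$-endomorphism invariance. Given $g\in\End(X)$, write $\overline g=\overline{v_1}+\overline{v_2}$ with $v_1,v_2$ units of $S$, lift $\overline{v_1},\overline{v_2}$ to automorphisms $\phi_1,\phi_2$ of $X$, and note that $g-(\phi_1+\phi_2)=:j\in J(\End(X))$, so that $g=\phi_1+(\phi_2+j)$ with $\phi_2+j=\phi_2(1+\phi_2^{-1}j)$ again an automorphism of $X$. Since $M$ is $\mathcal X$-automorphism invariant, $\psi(u(M))\subseteq u(M)$ for every automorphism $\psi$ of $X$, whence $g(u(M))\subseteq\phi_1(u(M))+(\phi_2+j)(u(M))\subseteq u(M)$; that is, $M$ is invariant under every endomorphism of its $\mathcal X$-envelope. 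The main obstacle I anticipate is the point used in the second paragraph: one has to extract from the proof of Theorem \ref{type} that the non-Boolean factor of $\mathcal B$ is \emph{all} of $S_2$ and that $1_{S_1}$ is absorbed into the Boolean factor, so that the vanishing of $\mathcal B_1$ really does propagate to $S_1=0$ rather than merely giving information internal to $\mathcal B$. (One could instead route the argument through Theorem \ref{struct}: the hypothesis kills the Boolean summand of $\End(M)/J(\End(M))$, which is isomorphic to $\End(N)/J(\End(N))$ for the semiboolean summand $N$ of $M$, forcing $N=0$ and hence $M=L$ already $\mathcal X$-endomorphism invariant.)
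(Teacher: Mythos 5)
Your proof is correct, and it follows the approach the paper intends: the theorem is quoted from \cite{GKS}, and the argument there is exactly this one — identify $\End(M)/J(\End(M))$ with the unit-stable subring $\im\Psi$ of $\End(X)/J(\End(X))$, use the Boolean/two-good decomposition (Theorems 2.1 and \ref{type}) to see that the no-$\mathbb F_2$-image hypothesis kills the Boolean factor of $S$ itself, and then write every endomorphism of $X$ as a sum of two automorphisms after absorbing the radical term. Your handling of the delicate point (that the proof of Theorem \ref{type} gives $\mathcal B_2=S_2$ and $1_{S_1}\in\mathcal B_1$, so $\mathcal B_1=0$ forces $S_1=0$) is accurate, and the alternative route through Theorem \ref{struct} that you sketch is also valid.
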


When $R$ is a commutative ring, the above theorem has the following consequence.

\begin{cor} \label{comm}
If $R$ is a commutative ring with no homomorphic image isomorphic to $\mathbb F_2$ and $M$ is an $\mathcal X$-automorphism invariant $R$-module with a monomorphic $\mathcal X$-envelope $u:M\rightarrow X$ such that $\End(X)/J(\End(X))$ is von Neumann regular right self-injective and idempotents lift modulo $J(\End(X))$, then $M$ is $\mathcal X$-endomorphism invariant.
\end{cor}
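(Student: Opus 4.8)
The plan is to reduce the statement to Theorem~\ref{noF2}. All hypotheses of that theorem concerning the envelope $u:M\rightarrow X$ and the ring $\End(X)/J(\End(X))$ are already assumed here, so the only thing left to verify is that $\End_R(M)$ has no homomorphic image isomorphic to $\mathbb F_2$; once this is done, Theorem~\ref{noF2} immediately yields that $M$ is $\mathcal X$-endomorphism invariant.

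To control the homomorphic images of $\End_R(M)$, I would bring in the canonical homothety map $\lambda:R\rightarrow \End_R(M)$ sending $r$ to the endomorphism $\lambda_r:m\mapsto mr$. Because $R$ is commutative, each $\lambda_r$ is genuinely $R$-linear and $\lambda$ is a \emph{unital ring homomorphism} (this is precisely where commutativity is used: in general $r\mapsto\lambda_r$ is only an anti-homomorphism). Now suppose toward a contradiction that there is a surjective ring homomorphism $\varphi:\End_R(M)\rightarrow \mathbb F_2$. Then $\varphi\circ\lambda:R\rightarrow \mathbb F_2$ is a ring homomorphism carrying $1_R$ to $1_{\mathbb F_2}$; since $\mathbb F_2$ is generated as a ring by its identity element, $\varphi\circ\lambda$ is automatically surjective. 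This exhibits $\mathbb F_2$ as a homomorphic image of $R$, contradicting the hypothesis on $R$. Hence $\End_R(M)$ has no homomorphic image isomorphic to $\mathbb F_2$, and the corollary follows from Theorem~\ref{noF2}.

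There is essentially no obstacle in this argument; the only points that need a moment's attention are that the homothety map is a ring homomorphism rather than merely an anti-homomorphism (which forces the use of commutativity of $R$), and that any unital ring map into $\mathbb F_2$ is necessarily onto.
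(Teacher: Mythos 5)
Your argument is correct and is essentially identical to the paper's own proof: both use the homothety map $R\rightarrow \End_R(M)$ (a ring homomorphism precisely because $R$ is commutative), compose it with a hypothetical surjection onto $\mathbb F_2$ to contradict the hypothesis on $R$, and then invoke Theorem~\ref{noF2}. Your added remark that any unital ring map into $\mathbb F_2$ is automatically onto just makes explicit a point the paper leaves implicit.
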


\begin{proof}
If $R$ is a commutative ring then there exists a ring homomorphism $f:R\rightarrow \End_R(M)$. Now, if there exists a ring homomorphism $g:\End(M)\rightarrow \mathbb F_2$, then the composition $g\circ f:R\rightarrow \mathbb F_2$ gives a ring homomorphism, thus contradicting the assumption that $R$ has no homomorphic image isomorphic to $\mathbb F_2$. This means that $\End(M)$ has no homomorphic image isomorphic to $\mathbb F_2$, and hence, $M$ is $\mathcal X$-endomorphism invariant by the above theorem.
\end{proof}

We proceed now to characterize when an indecomposable $\mathcal X$-automorphism invariant module is $\mathcal X$-endomorphism invariant. This characterization will play a central role in our applications in the next section.

\begin{lem} \label{indecomp-1}
Let $M$ be an $\mathcal X$-automorphism invariant module with a monomorphic $\mathcal X$-envelope $u:M\rightarrow X$ such that $\End(X)/J(\End(X))$ is von Neumann regular right self-injective and idempotents lift modulo $J(\End(X))$. If $X$ is indecomposable, then $M$ is $\mathcal X$-endomorphism invariant.
\end{lem}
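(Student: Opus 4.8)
The plan is to show that, under these hypotheses, indecomposability of $X$ forces $\End(X)$ to be a \emph{local} ring, and then to finish by the standard observation that in a local ring every element is a sum of (at most two) units, so that every endomorphism of $X$ is a sum of automorphisms and $\mathcal X$-automorphism invariance of $M$ upgrades automatically to $\mathcal X$-endomorphism invariance.

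First I would identify $M$ with its image $u(M)\subseteq X$, so that the two invariance conditions read: $\sigma(M)\subseteq M$ for every automorphism $\sigma$ of $X$ (the hypothesis) and $g(M)\subseteq M$ for every endomorphism $g$ of $X$ (the goal). The first substantive step is to check that $\End(X)$ is local. Since $X$ is indecomposable, the only idempotents of $\End(X)$ are $0$ and $1_X$; as idempotents lift modulo $J(\End(X))$, the ring $\End(X)/J(\End(X))$ therefore also has no idempotents besides $0$ and $1$. But $\End(X)/J(\End(X))$ is von Neumann regular by assumption, and a von Neumann regular ring with no nontrivial idempotents is a division ring: if $a\neq 0$ and $aba=a$, then $ab$ and $ba$ are idempotents, hence both equal $1$, so $a$ is a unit. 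Thus $\End(X)/J(\End(X))$ is a division ring, which means precisely that $\End(X)$ is local.

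Now take any $g\in\End(X)$. If $g\notin J(\End(X))$, then $g$ is a unit of the local ring $\End(X)$, i.e.\ an automorphism of $X$, and so $g(M)\subseteq M$ by $\mathcal X$-automorphism invariance. If instead $g\in J(\End(X))$, then $1_X-g$ is a unit of $\End(X)$, hence an automorphism of $X$, so $(1_X-g)(M)\subseteq M$; since $M$ is also stable under $1_X$, it is stable under the difference $1_X-(1_X-g)=g$, i.e.\ $g(M)\subseteq M$. In either case $g(M)\subseteq M$, so $M$ is $\mathcal X$-endomorphism invariant.

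The only point that needs care is the implication ``$X$ indecomposable $\Rightarrow$ $\End(X)$ local'', and this is exactly where the standing hypotheses on $\End(X)/J(\End(X))$ are genuinely used: regularity is essential, since a ring with only the trivial idempotents need not be local in general. (Alternatively one could invoke Theorem~2.1, observing that the indecomposable regular self-injective ring $\End(X)/J(\End(X))$ is either $\mathbb F_2$ or has every element a sum of two units, and then lift units modulo the radical; but the local-ring route is shorter and uses nothing beyond regularity and idempotent lifting.)
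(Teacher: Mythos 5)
Your proof is correct and follows essentially the same route as the paper: the key step in both is that indecomposability of $X$ together with idempotent lifting forces $\End(X)/J(\End(X))$ to be a von Neumann regular ring with only trivial idempotents, hence a division ring. The only difference is the finish: the paper invokes the result of \cite{KS2} that every element of $\End(X)$ is then a sum of two or three units, whereas you note directly that $\End(X)$ is local and write each endomorphism $g$ either as a unit or as $1_X-(1_X-g)$ with $1_X-g$ a unit, so that invariance of the submodule $u(M)$ under automorphisms and under differences gives the claim; this variant is slightly more self-contained and equally valid.
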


\begin{proof}
Assume $X$ is indecomposable. Then $\End(X)/J(\End(X))$ is a von Neumann regular ring with no non-trivial idempotents and consequently, $\End(X)/J(\End(X))$ is a division ring. Thus each element of $\End(X)/J(\End(X))$, and hence of $\End(X)$, is a sum of two or three units \cite{KS2}. Therefore, $M$ is invariant under any endomorphism of $X$.
\end{proof}

Our next proposition shows that the converse of the above lemma holds under the additional assumption that $M$ is an indecomposable module.

\begin{prop} \label{indecomp}
Let $M$ be an indecomposable $\mathcal X$-automorphism invariant module with a monomorphic $\mathcal X$-envelope $u:M\rightarrow X$ such that $\End(X)/J(\End(X))$ is von Neumann regular right self-injective and idempotents lift modulo $J(\End(X))$. Then the following statements are equivalent:
\begin{enumerate}
\item $M$ is $\mathcal X$-endomorphism invariant.
\item $X$ is indecomposable.
\end{enumerate}
\end{prop}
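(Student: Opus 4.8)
The plan is to prove the nontrivial implication $(1)\Rightarrow(2)$, since $(2)\Rightarrow(1)$ is exactly Lemma~\ref{indecomp-1}. I would argue by contraposition: assume $X$ is decomposable, write $X=X_1\oplus X_2$ with $X_1\neq 0$ and $X_2\neq 0$, and let $e\in\End(X)$ be the idempotent with $\im e=X_1$ and $\Ker e=X_2$. The idea is to feed the two complementary projections $e$ and $1-e$ into the hypothesis that $M$ is $\mathcal X$-endomorphism invariant and then invoke the minimality built into the definition of the envelope.

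Identifying $M$ with $u(M)\subseteq X$, endomorphism invariance yields $e(M)\subseteq M$ and $(1-e)(M)\subseteq M$. Since every $x\in M$ satisfies $x=e(x)+(1-e)(x)$ with $e(x)\in e(M)\subseteq X_1$ and $(1-e)(x)\in(1-e)(M)\subseteq X_2$, we get an internal decomposition $M=e(M)\oplus(1-e)(M)$, which transports to an honest direct sum decomposition of $M$ because $u$ is a monomorphism. As $M$ is indecomposable, one of the two summands vanishes.

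Now the minimality of the $\mathcal X$-envelope enters. If $(1-e)(M)=0$, then $u(M)\subseteq \im e=X_1$, hence $e\circ u=u$; by the defining property of the $\mathcal X$-envelope (whenever $h\circ u=u$, the map $h$ must be an automorphism), $e$ is an automorphism of $X$, and an idempotent automorphism equals $1_X$, forcing $X_2=\Ker e=0$, a contradiction. Symmetrically, if $e(M)=0$, then $u(M)\subseteq\Ker e$, so $(1-e)\circ u=u$, and the same reasoning applied to the idempotent $1-e$ gives $1-e=1_X$, i.e. $e=0$ and $X_1=\im e=0$, again a contradiction. Hence $X$ must be indecomposable.

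The only real subtlety, and hence the step I would flag as the crux, is the passage from ``$M$ is invariant under the projection $e$'' to ``$X$ has no proper decomposition'': this is precisely where the minimality clause in the definition of an $\mathcal X$-envelope is indispensable, via the elementary observation that an idempotent $h$ with $h\circ u=u$ is forced to be the identity. For completeness I would also mention an alternative route through the injective ring homomorphism $\Psi\colon\End(M)/J(\End(M))\to\End(X)/J(\End(X))$: $\mathcal X$-endomorphism invariance upgrades the ``stable under left multiplication by units'' property exploited in Theorem~\ref{type} to ``stable under left multiplication by all elements'', so $\im\Psi$ is a unital left ideal and therefore all of $\End(X)/J(\End(X))$; since $M$ is indecomposable, Theorem~\ref{struct} makes $\End(M)/J(\End(M))$ a von Neumann regular ring with no nontrivial idempotents, hence a division ring, so $\End(X)/J(\End(X))$ is a division ring, and lifting idempotents modulo $J(\End(X))$ forces $X$ to be indecomposable. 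The contrapositive argument above is, however, shorter and self-contained.
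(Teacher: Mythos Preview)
Your contrapositive argument is correct and is essentially the paper's own proof, only with the details spelled out: the paper simply says that invariance under idempotents plus indecomposability of $M$ forces the only idempotents of $\End(X)$ to be $0$ and $1$, leaving implicit the step where minimality of the envelope turns an idempotent $h$ with $h\circ u=u$ into the identity. Your alternative route via $\Psi$ is a genuine (and correct) variant, but it is not the approach the paper takes.
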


\begin{proof}
Assume $M$ is $\mathcal X$-endomorphism invariant. Then, in particular, $M$ is invariant under any idempotent endomorphism of $X$. As $M$ is indecomposable, this means that the only idempotents in $\End(X)$ are $0$ and $1$. So $X$ is indecomposable. The reverse implication follows from the above lemma.
\end{proof}

As a consequence, we have the following characterization of indecomposable $\mathcal X$-automorphism invariant modules which are not $\mathcal X$-endomorphism invariant.

\begin{thm} \label{noei-struct}
Let $M$ be an indecomposable $\mathcal X$-automorphism invariant module with a monomorphic $\mathcal X$-envelope $u:M\rightarrow X$ such that $\End(X)/J(\End(X))$ is von Neumann regular right self-injective and idempotents lift modulo $J(\End(X))$. Assume that $M$ is not $\mathcal X$-endomorphism invariant. Then $\End(M)/J(\End(M))\cong \mathbb F_2$ and $\End(X)/J(\End(X))$ has a homomorphic image isomorphic to $\mathbb F_2 \times \mathbb F_2$.
\end{thm}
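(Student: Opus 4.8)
The plan is to derive the two conclusions in turn, using the first for the second. To pin down $\End(M)/J(\End(M))$, I would apply Theorem~\ref{struct} twice. On the one hand, it tells us every idempotent of $\End(M)/J(\End(M))$ is the image of an idempotent of $\End(M)$; since $M$ is indecomposable, the only idempotents of $\End(M)$ are $0$ and $1$, so $\End(M)/J(\End(M))$ likewise has only the idempotents $0$ and $1$. On the other hand, Theorem~\ref{struct} furnishes a decomposition $M=N\oplus L$ with $N$ semiboolean and $L$ an $\mathcal X$-endomorphism invariant module; indecomposability forces $M=N$ or $M=L$, and $M=L$ is excluded by the hypothesis that $M$ is \emph{not} $\mathcal X$-endomorphism invariant. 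Hence $M=N$ is semiboolean, i.e.\ $\End(M)/J(\End(M))$ is Boolean; a Boolean ring whose only idempotents are $0$ and $1$ equals $\mathbb F_2$, which proves the first assertion.

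For the second assertion I would first reduce it to the claim that $\bar S:=\End(X)/J(\End(X))$ is a Boolean ring. Because $M$ is indecomposable and not $\mathcal X$-endomorphism invariant, Proposition~\ref{indecomp} gives that $X$ is not indecomposable, so $\End(X)$ has a nontrivial idempotent; as idempotents lift modulo $J(\End(X))$ and a lift of a nontrivial idempotent remains nontrivial, $\bar S$ has a nontrivial idempotent $e$. Granting that $\bar S$ is Boolean, it is in particular commutative, so $e$ is central and $\bar S\cong e\bar S\times(1-e)\bar S$, a product of two nonzero Boolean rings; since any nonzero Boolean ring surjects onto $\mathbb F_2$ (quotient by a maximal ideal), $\bar S$ surjects onto $\mathbb F_2\times\mathbb F_2$, which is exactly what is wanted.

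It then remains to show $\bar S$ is Boolean. Here I would use the embedding $\Psi$ recalled just before Theorem~\ref{struct}, whose image $R$ is, by the results of \cite{GKS}, a subring of $\bar S$ that is stable under left multiplication by units of $\bar S$; since $1\in R$, this forces $R$ to contain every unit of $\bar S$. But by the first paragraph $R\cong\End(M)/J(\End(M))\cong\mathbb F_2=\{0,1\}$, so $1_{\bar S}$ is the only unit of $\bar S$. Now write $\bar S=B\times T$ with $B$ Boolean and every element of $T$ a sum of two units of $T$, using the structure theorem for von Neumann regular right self-injective rings stated just before Theorem~\ref{type}. For any unit $t$ of $T$, the element $(1_B,t)$ is a unit of $\bar S$, so $(1_B,t)=1_{\bar S}=(1_B,1_T)$ and $t=1_T$; hence the only unit of $T$ is $1_T$, and writing $1_T$ itself as a sum of two units of $T$ yields $1_T=1_T+1_T=0$. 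Thus $T=0$ and $\bar S=B$ is Boolean, and the reduction of the previous paragraph completes the argument.

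The step I expect to be the main obstacle is the one ingredient I have taken from \cite{GKS}, namely that $R=\im\Psi$ is stable under left multiplication by units of $\bar S$. This is where automorphism-invariance of $M$ is genuinely used: a unit of $\bar S$ lifts to a unit of $\End(X)$, that is, to an automorphism of $X$, which by hypothesis restricts to an automorphism of $M$, and chasing this through the definition of $\Psi$ gives the stated stability. Once this is in hand, everything else is the short combinatorial argument with the type decompositions of Section~2 indicated above.
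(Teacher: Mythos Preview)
Your proof is correct, but it follows a different path from the paper's, and in fact establishes more along the way.

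For the first assertion, the paper observes that $\End(M)/J(\End(M))$ is von Neumann regular (Theorem~\ref{struct}) with no nontrivial idempotents, hence a division ring, and then invokes Theorem~\ref{noF2} to force this division ring to be $\mathbb F_2$. You instead use the $N\oplus L$ decomposition of Theorem~\ref{struct} to conclude directly that $\End(M)/J(\End(M))$ is Boolean; combined with the absence of nontrivial idempotents this immediately gives $\mathbb F_2$. Both are short; yours avoids the appeal to Theorem~\ref{noF2}.

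For the second assertion the difference is more substantial. The paper simply quotes \cite{KS2}: if $\End(X)/J(\End(X))$ had no homomorphic image $\mathbb F_2\times\mathbb F_2$, every element of $\End(X)$ would be a sum of two or three units and $M$ would be $\mathcal X$-endomorphism invariant. Your argument instead proves the stronger fact that $\bar S=\End(X)/J(\End(X))$ is itself Boolean: since $\im\Psi$ is unit-stable and equals $\{0,1\}$, the only unit of $\bar S$ is $1$, which kills the non-Boolean factor in the decomposition $\bar S=B\times T$. This extra information is exactly what the paper needs (and quotes from ``the proof of Theorem~\ref{struct}'') in the proof of Theorem~\ref{thm:main}, so your route makes that later step self-contained. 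One minor wording issue: when you pass from a nontrivial idempotent of $\End(X)$ to one of $\bar S$, you are taking an \emph{image}, not a lift; the relevant fact is that $J(\End(X))$ contains no nonzero idempotents, not the lifting hypothesis.
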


\begin{proof}
By Theorem \ref{struct}, $\End(M)/J(\End(M))$ is a von Neumann regular ring. As $M$ is indecomposable, it follows that $\End(M)/J(\End(M))$ is a von Neumann regular ring with no non-trivial idempotents and consequently, $\End(M)/J(\End(M))$ is a division ring. Now, by Theorem \ref{noF2}, we know that $\End(M)$ has a homomorphic image isomorphic to $\mathbb F_2$. Thus, we have $\End(M)/J(\End(M))\cong \mathbb F_2$. By \cite{KS2} it is known that if $\End(X)/J(\End(X))$ has no homomorphic image isomorphic to $\mathbb F_2 \times \mathbb F_2$, then each element of $\End(X)$ is a sum of two or three units. This would make $M$ an $\mathcal X$-endomorphism invariant module, a contradiction to our assumption. This shows that $\End(X)/J(\End(X))$ has a homomorphic image isomorphic to $\mathbb F_2 \times \mathbb F_2$.
\end{proof}

\section{Applications to Automorphism-invariant modules.}

\bigskip

\noindent In this section, we will apply our previously obtained results to the setting in which $\mathcal X$ is the class of injective modules. In this case, $\mathcal X$-automorphism invariant modules are just called automorphism-invariant modules and $\mathcal X$-endomorphism invariant modules are called quasi-injective modules. We will denote the injective envelope of a module $M$ by $E(M)$.

In \cite{AFT}, several equivalent characterizations are given of when an  automorphism-invariant module with finite Goldie dimension is quasi-injective. Let us note that if $M$ is an automorphism-invariant module of finite Goldie dimension, then $M=\oplus_{i=1}^n M_i$ is a direct sum of indecomposable modules also having finite Goldie dimension. Moreover, it is known that a finite direct sum of automorphism-invariant modules is again automorphism-invariant if and only if the direct summands are relatively injective \cite{LZ}. Thus, the module $M$ is quasi-injective if and only if so is each $M_i$. This means that the question of whether an automorphism-invariant module with finite Goldie dimension is quasi-injective reduces to study when an indecomposable automorphism-invariant module having finite Goldie dimension is quasi-injective. Apparently, the problem of whether these modules are always quasi-injective has been open until now in the literature \cite{AFT}. Note however, that Examples~\ref{noqi} and \ref{noqi2} in the previous section provide negative answers to this question.

We will begin our characterization by proving the following theorem.

\begin{thm}\label{thm:main} Let $M$ be an indecomposable automorphism-invariant module with finite Goldie dimension such that $M$ is not quasi-injective. Then
\begin{enumerate}
\item $\End(M)/J(\End(M))\cong \mathbb{F}_2$.
\item There exists a finite set of non-isomorphic indecomposable injective modules $\{E_i\}_{i=1}^n$, with $n\geq 2$, such that $E(M)=\oplus_{i=1}^n E_i$ and $\End(E_i)/J(\End(E_i))\cong \mathbb{F}_2$ for every $i=1,\ldots,n$.
\end{enumerate}
\end{thm}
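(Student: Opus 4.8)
The strategy is to combine the indecomposable characterization in Theorem~\ref{noei-struct} with a careful analysis of the injective envelope $E(M)$ of a module of finite Goldie dimension. First I would record that since $M$ has finite Goldie dimension, $E(M)$ decomposes as a finite direct sum $E(M)=\bigoplus_{i=1}^{n}E_i$ of indecomposable injective modules, and that $S:=\End(E(M))$ is a von Neumann regular right self-injective ring with $S/J(S)$ von Neumann regular right self-injective and idempotents lifting modulo $J(S)$; this is exactly the setting of the previous section with $\mathcal X$ the class of injectives and $X=E(M)$. Part~(1) is then immediate: it is precisely the conclusion $\End(M)/J(\End(M))\cong\mathbb F_2$ of Theorem~\ref{noei-struct}, since $M$ is indecomposable automorphism-invariant and by hypothesis not quasi-injective (hence not $\mathcal X$-endomorphism invariant).

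For part~(2), I would start from the other conclusion of Theorem~\ref{noei-struct}, namely that $S/J(S)$ has a homomorphic image isomorphic to $\mathbb F_2\times\mathbb F_2$, and show this forces each $E_i$ to satisfy $\End(E_i)/J(\End(E_i))\cong\mathbb F_2$. The key observation is that $S/J(S)\cong\prod_{i=1}^n \End(E_i)/J(\End(E_i))$ (using that $E(M)$ is a finite direct sum of the $E_i$ with no nonzero homomorphisms surviving modulo the radical between non-isomorphic indecomposable injectives — more precisely, $\End(E_i)$ is local for each $i$, so $\End(E_i)/J(\End(E_i))$ is a division ring $D_i$, and the radical of $S$ is the set of matrices with all diagonal entries in $J(\End(E_i))$ together with all off-diagonal entries between non-isomorphic $E_i$; for isomorphic $E_i$ one gets matrix rings over division rings as blocks). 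Since $M$ is square-free — which I would need to verify from indecomposability plus automorphism-invariance, or extract from the structure of $\Psi:\End(M)/J(\End(M))\hookrightarrow S/J(S)$ with image a subring stable under left multiplication by units of $S/J(S)$ — the $E_i$ appearing must be pairwise non-isomorphic, so $S/J(S)\cong\prod_{i=1}^n D_i$. Then $\im\Psi\cong\mathbb F_2$ sits inside $\prod_{i=1}^n D_i$ as a subring stable under left multiplication by units; projecting to each factor $D_i$ gives a nonzero ring homomorphism $\mathbb F_2\to D_i$, forcing $D_i\supseteq\mathbb F_2$; and the stability-under-units condition together with the fact that a copy of $\mathbb F_2$ generates (under left multiplication by units) all of $D_i$ unless $D_i=\mathbb F_2$ forces each $D_i=\mathbb F_2$. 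This is essentially the mechanism already used in the proof of Theorem~\ref{noei-struct}: if some $D_i$ properly contained $\mathbb F_2$, one could split off that factor and every element of the corresponding summand of $S$ would be a sum of two units, contradicting the fact that $\im\Psi\cong\mathbb F_2$ is not contained in the sum-of-two-units part.

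Finally I would check $n\geq 2$: if $n=1$ then $E(M)$ is indecomposable, so by Lemma~\ref{indecomp-1} (or Proposition~\ref{indecomp}) $M$ would be $\mathcal X$-endomorphism invariant, i.e.\ quasi-injective, contrary to hypothesis. I expect the main obstacle to be the bookkeeping around the structure of $S/J(S)$ as a product of division rings together with the square-freeness of $M$: one must justify that the indecomposable injective summands $E_i$ of $E(M)$ are pairwise non-isomorphic (otherwise $S/J(S)$ would have a matrix-ring block $M_k(D)$ with $k\geq 2$, and one must rule this out using that $M$, being indecomposable and automorphism-invariant, is square-free, hence $E(M)$ is square-free). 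Once that is in place, the reduction of each factor to $\mathbb F_2$ follows from the same unit-sum dichotomy that drives Theorem~\ref{noei-struct}, applied one factor at a time.
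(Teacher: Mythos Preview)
Your proposal is correct and close in spirit to the paper's argument, but you are working harder than necessary on the point you yourself flag as the main obstacle. The paper bypasses the square-freeness bookkeeping by invoking Theorem~\ref{struct} directly: since $M$ is indecomposable and not quasi-injective, the decomposition $M=N\oplus L$ of Theorem~\ref{struct} forces $M=N$, so $M$ is semiboolean; and the proof of Theorem~\ref{struct} (via Theorem~\ref{type}, where $R_2=S_2$) then shows that $\End(E(M))/J(\End(E(M)))$ is itself a Boolean ring. From this single fact everything in part~(2) drops out at once: a Boolean ring has no matrix block $M_k(D)$ with $k\geq 2$, so the indecomposable summands $E_i$ are pairwise non-isomorphic, and each division ring $\End(E_i)/J(\End(E_i))$ occurs as a factor of a Boolean ring, hence equals $\mathbb F_2$. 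Your route through the embedding $\Psi$ and stability under left multiplication by units is not wrong --- it amounts to reproving this Boolean conclusion in the special case $\im\Psi\cong\mathbb F_2$ (every unit of $S/J(S)$ must then lie in $\{0,1\}$, which forces the sum-of-two-units factor $S_2$ to vanish) --- but the paper gets there in one line because the heavy lifting was already packaged in Theorem~\ref{struct}. For $n\geq 2$ the paper cites Theorem~\ref{noei-struct} (the $\mathbb F_2\times\mathbb F_2$ image), which is equivalent to your appeal to Proposition~\ref{indecomp}.
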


\begin{proof} By Proposition \ref{noei-struct}, $\End(M)/J(\End(M))\cong \mathbb{F}_2$.
On the other hand, we know from Theorem~\ref{struct} that $M=N\oplus L$ is the direct sum of a quasi-injective module $L$ and a semiboolean module $N$. As $M$ is indecomposable and non quasi-injective, this means that $M=N$. Call $E=E(M)$ the injective envelope of $M$. Since  $M$ has finite Goldie dimension, $E$ must be a finite direct sum of indecomposable injective modules. But again, the proof of Theorem~\ref{struct} shows that $\End(E)/J(\End(E))$ is a Boolean ring and therefore, $E$ is square free. So
$E= E_1\oplus\ldots\oplus E_n$ for some indecomposable injective modules $E_i$ satisfying that $E_i$ is  non-isomorphic to $E_j$ for any $j\neq i$. Moreover, Theorem~\ref{noei-struct} shows that $n\geq 2$. Finally, as each $E_i$ is an indecomposable injective module, $\End(E_i)/J(\End(E_i))$ is a division ring for every $i$. And, as $\End(E)/J(\End(E))$ is a Boolean ring, this means that $\End(E_i)/J(\End(E_i))\cong \mathbb{F}_2$ for each $i=1,\ldots,n$.
\end{proof}

In particular, if $M$ is a finitely cogenerated module, its socle ${\rm Soc}(M)$ is finitely generated and essential in $M$. So we get the following corollary.

\begin{cor}\label{cor:sim} Let $M$ be an indecomposable finitely cogenerated automorphism-invariant module which is not quasi-injective. If we write ${\rm Soc}(M)=\oplus_{i=1}^n C_i$ as a direct sum of indecomposable modules, then
\begin{enumerate}
\item $n\geq 2$.
\item $\End(M)/J(\End(M))\cong \mathbb{F}_2.$
\item $\End(C_i)\cong \mathbb{F}_2$ for every $i\in I$.
\item $C_i\ncong C_j$ if $i\neq j$.
\end{enumerate}
\end{cor}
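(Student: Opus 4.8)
The plan is to deduce Corollary~\ref{cor:sim} directly from Theorem~\ref{thm:main}, using the standard correspondence between a finitely cogenerated module and its socle. First I would record the preliminary observation that a finitely cogenerated module $M$ has finitely generated essential socle; hence $M$ has finite Goldie dimension and Theorem~\ref{thm:main} applies, giving at once statement (2): $\End(M)/J(\End(M))\cong\mathbb{F}_2$. Next, since ${\rm Soc}(M)$ is essential in $M$, the injective envelope $E(M)$ is also the injective envelope of ${\rm Soc}(M)$, so writing ${\rm Soc}(M)=\oplus_{i=1}^n C_i$ with each $C_i$ indecomposable, we get $E(M)=\oplus_{i=1}^n E(C_i)$, and each $E(C_i)$ is an indecomposable injective module. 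Comparing with the decomposition $E(M)=\oplus_{i=1}^n E_i$ provided by Theorem~\ref{thm:main}(2), the Krull--Schmidt--Azumaya theorem for injective modules (all the $E(C_i)$ have local endomorphism rings) lets me match the two decompositions up to isomorphism and permutation, so that $E(C_i)\cong E_i$ after reindexing. This immediately yields (1) $n\geq 2$ and also $\End(E(C_i))/J(\End(E(C_i)))\cong\mathbb{F}_2$ for every $i$, together with $E(C_i)\ncong E(C_j)$ for $i\neq j$, which gives (4) $C_i\ncong C_j$ for $i\neq j$ since the injective envelope of a module is unique up to isomorphism.

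It remains to establish (3), namely $\End(C_i)\cong\mathbb{F}_2$. Here I would use that each $C_i$ is an indecomposable submodule of the automorphism-invariant module $M$ whose socle is $C_i$ itself (a socle of a module is semisimple, so $C_i$, being an indecomposable direct summand of a semisimple module, is simple). Once $C_i$ is known to be simple, $\End(C_i)$ is a division ring; and the point is to show it equals $\mathbb{F}_2$. I would argue this via the embedding $\End(C_i)\hookrightarrow \End(E(C_i))$ induced by $E(C_i)$ being the injective envelope of $C_i$: restriction to the essential submodule $C_i$ is injective on endomorphisms, and an endomorphism of $C_i$ extends to $E(C_i)$; composing with the projection $\End(E(C_i))\to\End(E(C_i))/J(\End(E(C_i)))\cong\mathbb{F}_2$ and noting that a nonzero endomorphism of $C_i$ (being an isomorphism, as $C_i$ is simple) extends to an automorphism of $E(C_i)$, hence maps to a unit of $\mathbb{F}_2$, i.e.\ to $1$, shows the composite $\End(C_i)\to\mathbb{F}_2$ is injective. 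Since $\End(C_i)$ is a division ring mapping injectively (as a ring) into $\mathbb{F}_2$, it must be $\mathbb{F}_2$.

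Alternatively, and perhaps more cleanly, I would observe that since $C_i$ is a simple direct summand of ${\rm Soc}(M)$, it is isomorphic to ${\rm Soc}(E_i)$ for the matching indecomposable injective $E_i$, and simple modules with given injective envelope $E_i$ have endomorphism ring embedding into $\End(E_i)/J(\End(E_i))$; combined with $\End(E_i)/J(\End(E_i))\cong\mathbb{F}_2$ from Theorem~\ref{thm:main} this forces $\End(C_i)\cong\mathbb{F}_2$. I expect the main (minor) obstacle to be the careful justification that the two given decompositions of $E(M)$ —the one from the socle and the one from Theorem~\ref{thm:main}— coincide up to isomorphism and that the pairing respects the residue-field data; this is just an application of uniqueness of injective envelopes and the Krull--Schmidt property for finite direct sums of indecomposable injectives, but it must be stated precisely. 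Everything else is bookkeeping, so the corollary follows with little extra work once Theorem~\ref{thm:main} is in hand.
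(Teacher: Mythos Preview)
Your proposal is correct and follows essentially the same route as the paper: both reduce to Theorem~\ref{thm:main} via the observation that $E(M)=\oplus_{i=1}^n E(C_i)$ since ${\rm Soc}(M)$ is essential in $M$. The paper's proof is a single line invoking this and Theorem~\ref{thm:main}; your version simply unpacks the details, in particular the passage from $\End(E(C_i))/J(\End(E(C_i)))\cong\mathbb{F}_2$ to $\End(C_i)\cong\mathbb{F}_2$, which the paper leaves implicit.

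One small simplification: the Krull--Schmidt matching step is unnecessary. Theorem~\ref{thm:main}(2) asserts the existence of \emph{some} decomposition of $E(M)$ into non-isomorphic indecomposable injectives with residue field $\mathbb{F}_2$, but its proof applies to \emph{any} decomposition of $E(M)$ into indecomposable injectives; so you may simply take $E_i=E(C_i)$ from the outset rather than matching two decompositions after the fact.
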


\begin{proof}
Let us note that $E(M)=\oplus_{i=1}^nE(C_i)$. Therefore, the result is an immediate consequence of the above theorem.
\end{proof}

\noindent Our next step will be to extend the above corollary to automorphism-invariant modules over right bounded right noetherian rings. Recall that a ring $R$ is called {\it right bounded} if each essential right ideal of $R$ contains a two-sided ideal which is essential as a right ideal. A right noetherian ring $R$ is right bounded if and only if each essential right ideal of $R$ contains a non-zero two-sided ideal \cite{Jategaonkar}. Moreover, a ring $R$ is called a {\it right FBN ring} if $R$ is a right noetherian ring such that $R/P$ is right bounded for each prime ideal $P$ of $R$. It is well known that over a right bounded right noetherian ring $R$, any indecomposable injective right module is of the form $E(U_R)$, where $U_R$ is a uniform right submodule of $R/P$ for some prime ideal $P$. Furthermore, if $R$ is right FBN, and  $U_R,U'_R$ are two non-zero uniform right submodules of $R/P$, then $E(U)\cong E(U')$  \cite[p. 163]{GW}. Our next result gives a sufficient condition for an automorphism-invariant module over a right bounded right noetherian ring to be quasi-injective. We begin by proving the following technical proposition.

\begin{prop} \label{fbn}
Let $R$ be a right bounded right noetherian ring and $M$, a non quasi-injective, semiboolean, automorphism-invariant module. Then there exists a set $\{C_i\}_{i\in I}$ of non-isomorphic simple right $R$-modules with $|I|\geq 2$ such that ${\rm Soc}(M)=\oplus_{i\in I}C_i$ is essential in $M$ and $\End(C_i)\cong {\mathbb F}_2$ for every $i\in I$. 
Moreover, if $R$ is a commutative ring, then $|C_i|=2$  for every $i\in I$.
\end{prop}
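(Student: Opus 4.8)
The plan is to combine the structural results already established with the classical theory of indecomposable injective modules over right bounded right noetherian rings. First, I would observe that since $M$ is semiboolean, $\End(M)/J(\End(M))$ is a Boolean ring, and since $M$ is automorphism-invariant with finite... wait — actually $M$ need not have finite Goldie dimension here. So I cannot directly invoke Theorem~\ref{thm:main}. Instead, I would argue as follows: by Theorem~\ref{struct}, $M = N \oplus L$ with $N$ semiboolean and $L$ quasi-injective; since $M$ is semiboolean and not quasi-injective one can reduce to the case $M = N$, so $\End(M)/J(\End(M))$ is Boolean and $E = E(M)$ has $\End(E)/J(\End(E))$ a Boolean ring, hence $E$ is square-free.

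Next I would use the description of indecomposable injectives over a right bounded right noetherian ring: $E$ is a direct sum of indecomposable injectives $E(U_i)$, each $U_i$ a uniform right submodule of $R/P_i$ for some prime $P_i$. The square-freeness of $E$ forces the $E(U_i)$ to be pairwise non-isomorphic. To pin down that ${\rm Soc}(M) = \bigoplus C_i$ with each $C_i$ simple and essential, I would first handle the case where $M$ has finite Goldie dimension via Theorem~\ref{thm:main} (giving $n \geq 2$ and $\End(C_i) \cong \mathbb{F}_2$, where $C_i$ is the socle of $E_i$, which must be simple since over a noetherian ring... actually one needs $R$ right bounded right noetherian so that each indecomposable injective $E_i$ has essential simple socle — this is where right boundedness enters, as $E(U)$ for $U$ uniform in $R/P$ has simple socle when $R$ is, say, right noetherian right bounded). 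For the infinite Goldie dimension case, I would need to show it does not actually occur or handle it by a local argument: decompose $E = \bigoplus_{i \in I} E_i$ into indecomposable injectives (possible since $R$ is right noetherian, so injectives are direct sums of indecomposables), each $E_i$ with essential simple socle $C_i$, and use that $M$ essential in $E$ forces ${\rm Soc}(M) = {\rm Soc}(E) = \bigoplus C_i$. The condition $\End(C_i) \cong \mathbb{F}_2$ and $C_i \ncong C_j$, and $|I| \geq 2$, I would extract by combining Theorem~\ref{noei-struct} applied to suitable indecomposable summands together with the Boolean-ring condition on $\End(E)/J(\End(E))$ restricting to each $\End(E_i)/J(\End(E_i))$, which is a division ring that embeds as an image, forcing it to be $\mathbb{F}_2$; and hence ${\rm End}(C_i)$, which injects into $\End(E_i)/J(\End(E_i))$ for $E_i$ indecomposable injective with essential socle, is also $\mathbb{F}_2$. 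That $|I| \geq 2$ follows because if $|I| = 1$ then $E$ is indecomposable and Lemma~\ref{indecomp-1} would make $M$ quasi-injective.

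For the final assertion in the commutative case, I would argue that when $R$ is commutative noetherian, each $C_i$ is a simple module $R/\mathfrak{m}_i$ for a maximal ideal $\mathfrak{m}_i$, so $\End(C_i) \cong R/\mathfrak{m}_i$ as a ring; combined with $\End(C_i) \cong \mathbb{F}_2$ this yields $R/\mathfrak{m}_i \cong \mathbb{F}_2$, hence $|C_i| = 2$.

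The main obstacle I expect is the case of infinite Goldie dimension: Theorem~\ref{thm:main} is stated only for finite Goldie dimension, so I must independently establish that over a right bounded right noetherian ring the relevant indecomposable injective summands $E_i$ of $E(M)$ have \emph{simple} essential socle (this is the key ring-theoretic input, using that $E_i = E(U)$ with $U$ uniform in $R/P$ and that over a right noetherian ring such an $E(U)$ is the injective hull of a uniform, with socle either zero or simple — and one must rule out zero socle, or else reformulate, perhaps using that $M$ semiboolean forces a strong structure), and then carefully transfer the $\mathbb{F}_2$-conditions from $\End(E_i)/J$ down to $\End(C_i)$. A secondary subtlety is justifying the reduction $M = N$ and making sure the decomposition $E(M) = \bigoplus E(C_i)$ is legitimate, i.e. that ${\rm Soc}(M)$ is essential in $M$ — this should follow from $M$ being a module over a right noetherian ring together with $E(M)$ being a direct sum of indecomposable injectives each with essential simple socle.
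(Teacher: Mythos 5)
Your outline reproduces the easy part of the paper's argument (decompose $E(M)=\oplus_{i\in I}E(U_i)$ with $U_i$ uniform in $R/P_i$, use the semiboolean hypothesis to get square-freeness and $\End(E(U_i))/J(\End(E(U_i)))\cong\mathbb F_2$, and get $|I|\geq 2$ from Lemma~\ref{indecomp-1}/Theorem~\ref{noei-struct}), but it has a genuine gap at exactly the step you yourself flag as the ``main obstacle'': you need each indecomposable summand $E(U_i)$ to have a nonzero (hence simple, by uniformity) essential socle, and you propose to get this from the claim that over a right bounded right noetherian ring every indecomposable injective $E(U)$, $U$ uniform in $R/P$, has simple essential socle. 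That claim is false: $\mathbb Z$ is commutative noetherian (so FBN), yet $E(\mathbb Z)=\mathbb Q$ is an indecomposable injective with zero socle. Neither right boundedness nor Theorem~\ref{thm:main} (which only gives the decomposition of $E(M)$ into non-isomorphic indecomposables with residue ring $\mathbb F_2$, and says nothing about socles) supplies the missing input, so your argument never actually produces the simple modules $C_i$, nor the two-element claim in the commutative case.

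The paper's proof devotes essentially all of its length to closing precisely this gap, and it does so using the $\mathbb F_2$ condition, not just noetherianness and boundedness. Fixing one summand, it sets $T=R/P$, picks a nonzero right ideal $V\subseteq U$, and studies $L=\{v\in V:\ r_T(v)\cap V\neq 0\}$: a chain-condition argument shows every element of $L$ is nilpotent, $L$ is multiplicatively closed, hence nilpotent, and primeness of $T$ forces $L^2=0$. Then two cases: if $L$ is a right ideal, primeness gives $L=0$, so left multiplication by any nonzero $v\in V$ extends to an automorphism $g_v$ of $E(V)$; since $\End(E(V))/J(\End(E(V)))\cong\mathbb F_2$, for $v\neq v'$ the difference $g_v-g_{v'}$ lies in the radical and has essential kernel, forcing $v-v'\in L=0$, so $|V|=2$ and $V$ is the required simple module with $E(U)=E(V)$. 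If $L$ is not a right ideal, an explicit computation with $v_1=v_0t_0\notin L$ produces a nonzero idempotent $v_1^2\in V$, forcing $U$ itself to be a simple right ideal of $T$. The commutative case then follows because there $L$ is automatically an ideal, so only the first case occurs and $|C_i|=2$. Without an argument of this kind (or some substitute showing the socle of each $E(U_i)$ is nonzero), your proposal does not prove the proposition; the remaining steps you describe (transferring $\End(E_i)/J(\End(E_i))\cong\mathbb F_2$ to $\End(C_i)$ once the socle is known to be simple and essential, non-isomorphy of the $C_i$, and the maximal-ideal argument in the commutative case) are fine but peripheral.
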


\begin{proof}
Let us first note that as $R$ is a right bounded ring, there exist prime ideals $\{P_i\}_{i\in I}$ and non-zero uniform right ideals $\{U_i\}_{i\in I}$ such that $E(M)=\oplus_{i\in I}E(U_i)$ and each $U_i$ is a submodule of $R/P_i$. Moreover, $E(M)$ is also a semiboolean module by the proof of Theorem~\ref{struct} and thus, $\End(E(M))/J(\End(E(M)))$ is square-free. It follows then that $E(U_i)\ncong E(U_j)$ if $i\neq j$. Furthermore, each $\End(E(U_i))/J(\End(E(U_i)))$ is a division ring. Thus $\End(E(U_i))/J(\End(E(U_i)))\cong \mathbb{F}_2$ for every $i\in I$, again because $\End(E(M))/J(\End(E(M)))$ is a Boolean ring. Finally, as $\End(E(M))$ must have a homomorphic image isomorphic to $\mathbb{F}_2\times \mathbb{F}_2$  by Theorem~\ref{noei-struct}, we deduce that $|I|\geq 2$.

Fix now an index $i\in I$ and call $P=P_i,U=U_i, Q=E(U)$ and $T=R/P$. Let us choose a nonzero right ideal $V\subseteq U$. Call $L=\{v\in V|\, r_T(v)\cap V\neq 0\}$, where $r_T(v)$ denotes the right annihilator of $v$ in $T$. 

We first show that any element in $L$ is nilpotent. Let us choose a non-zero element $v\in V$.

We have an ascending chain of right ideals 
$$r_T(v) \subseteq r_T(v^2) \subseteq \ldots \subseteq r_T(v^m) \subseteq \ldots$$ 
So there exists an $m_0$ such that $r_T(v^{m_0})=r_T(v^{m_0+m})$ for each $m\in \mathbb N$, since $R$ is right noetherian. Assume that $v^{m_0} \neq 0$. As $v^{m_0}T\subseteq V$ and
$0\neq r_T(v)\cap V$ is an essential submodule of $V$, since $V$ is uniform,  there exists a $t\in T$ such that $0\neq v^{m_0}t\in r_T(v)$. And thus, $v^{m_0+1}t=0$. But, as $r_T(v^{m_0})=r_T(v^{m_0+1})$, this means that $v^{m_0}t=0$, a contradiction that shows that any element in $L$ is nilpotent.

Therefore, $L$ is a nil subset of $T=R/P$. Now, it is easy to check that $L$ is multiplicatively closed. So $L$ is nilpotent by 
\cite[Theorem 6.21]{GW}. We claim that $L^2=0$. Let $m_0$ the biggest integer such that there exist elements 
$v_1,\ldots,v_{m_0}\in L$ such that $v_{m_0}\cdot\ldots\cdot v_1\neq 0$ and assume on the contrary that $m_0\geq 2$. 
Then, for any $t\in T$, we have that $v_1tv_2\in L$ and thus, $v_{m_0}\cdot\ldots\cdot (v_1tv_2) v_1$ 
is the product of $m_0+1$ elements in $L$ and so it is $0$. It follows that $v_{m_0}\cdot\ldots\cdot v_1Tv_2\cdot v_1=0$ and thus, $v_{m_0}\cdot\ldots\cdot v_1=0$ or $v_2 v_1=0$ since $T$ is a prime ring. 
In any of both cases, we deduce that $v_{m_0}\cdot\ldots\cdot v_1=0$ since $m_0\geq 2$. A contradiction that proves our claim.

So we have that $L^2=0$. Let us distinguish two possibilities. 

\medskip

 \noindent {\bf Case 1.} If $L$ is a right ideal of $T$, then $vT\subseteq L$ for any $v\in L$ and thus, $vTv=0$ as $L^2=0$. Again, as $T$ is prime, we deduce that $v=0$ and thus, $L=0$. But then, the left multiplication $f_v: V\rightarrow V$ by $v$ is a monomorphism for every non-zero $v\in V$ that extends to an isomorphism $g_v:E(V)\rightarrow E(V)$. We claim that $V$ has only two elements. Let us choose two non-zero elements $v,v'\in V$ and assume that they are not equal. Then $g_v,g_{v'}$ are two automorphisms of $E(V)$ and thus, $g_v-g_{v'}\in J(\End(E(V)))$ since $\End(E(V))/J(\End(E(V))\cong \mathbb{F}_2$. Therefore, $g_v-g_{v'}$ has essential kernel and so, $v-v'\in L=0$ and we get that $v=v'$. This means that $V$ has only two elements and thus, it is a simple submodule of $T$ satisfying that $Q=E(V)$.

\medskip

\noindent {\bf Case 2.} Assume that $L$ is not a right ideal of $T$. This means that there exists a $v_0\in L$ and a $t_0\in T$ such that $v_0t_0\notin L$. Therefore, $r_T(v_0t_0)\cap V=0$ and this means that $r_T((v_0t_0)^m)\cap V=0$ for each $m\geq 1$, as $V$ is uniform. Call $v_1=v_0t_0$ and let $f_{v_1^m}: V\rightarrow V$ be the left multiplication by $v_1^m$. Then $f_{v_1^m}$ is a monomorphism for each $m\geq 1$ and reasoning as in Case 1, we deduce that $v_1^m-v_1^{m'}\in L$ for each $m,m'\geq 1$. In particular, $v:=v_1-v_1^3\in L$. But then, $v_1v_0-v_1^3v_0=vv_0=0$ since $v,v_0\in L$. And multiplying on the right by $t_0$, we get that $e=v_1^2$ is a non-zero idempotent of $V$. As $U$ was uniform, we deduce that $U=V$ is a direct summand of $T$.
\medskip

Therefore, either we are in Case 1 for some nonzero submodule V of $W$ and we get that $Q$ is the injective envelope of the simple module $V$ having only two elements or, otherwise, we get that any nonzero submodule of $U$ equals $U$. Therefore, $U$ is a simple right ideal of $T$.

Finally, note that if $R$ is a commutative ring, then $L$ is a right ideal of $T$ and thus, we are always in Case 1. This completes the proof. 
\end{proof}

\begin{rem}
Assume that the ring $R$ in the above proposition is right FBN. Then, for any simple right $R$-module $C$, we have that $r_R(C)$ is a maximal two-sided ideal of $R$ and $R/r_R(C)$ is a simple artinian ring (see \cite[Theorem~9.10]{GW}). Moreover, $R/r_R(C)\cong C_R^n$ for some $n\geq	1$ as a right $R$-module. Therefore, 
$$R/r_R(C)\cong \End_R(R/r_R(C))\cong \End_R(C^n)\cong M_n(\mathbb{F}_2)$$
This means that, under this identification, $C$ becomes a simple right ideal of the full matrix ring $M_n(\mathbb{F}_2)$ and thus, $|C|=2^n$, where $n$ is the Goldie dimension of $R/r_R(C)$. 
\end{rem}

In particular, when $M$ is an indecomposable module, we get

\begin{cor}\label{indec-FBN} 
Let $R$ be a right bounded right noetherian ring and $M$, an indecomposable automorphism-invariant right $R$-module which is not quasi-injective. Then there exists a set $\{C_i\}_{i\in I}$ of non-isomorphic simple right $R$-modules with $|I|\geq 2$ such that ${\rm Soc}(M)=\oplus_{i\in I}C_i$ is essential in $M$ and $\End(C_i)\cong \mathbb{F}_2$ for each $i\in I$.
\end{cor}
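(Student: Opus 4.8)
The plan is to reduce this to Proposition~\ref{fbn}: once we know that an indecomposable, non quasi-injective, automorphism-invariant module over such a ring is automatically semiboolean, the corollary is immediate. So the only real content is that reduction. First I would recall that when $\mathcal X$ is the class of injective modules, the injective envelope $E(M)$ of any module always has the property that $\End(E(M))/J(\End(E(M)))$ is a von Neumann regular right self-injective ring and that idempotents lift modulo $J(\End(E(M)))$; this is the classical structure theory for endomorphism rings of injective modules. Consequently Theorems~\ref{struct} and~\ref{noei-struct} are both available for $M$.

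Next I would apply Theorem~\ref{struct} to obtain a decomposition $M=N\oplus L$ with $N$ semiboolean and $L$ quasi-injective. Since $M$ is indecomposable, one of the two summands must vanish; if $N=0$ then $M=L$ would be quasi-injective, contradicting the hypothesis, so $L=0$ and $M=N$ is semiboolean. (Alternatively, one may invoke Theorem~\ref{noei-struct} directly: it gives $\End(M)/J(\End(M))\cong\mathbb F_2$, which is a Boolean ring, so $M$ is semiboolean by definition.)

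At this point $M$ is a non quasi-injective, semiboolean, automorphism-invariant module over the right bounded right noetherian ring $R$, so Proposition~\ref{fbn} applies verbatim and furnishes a set $\{C_i\}_{i\in I}$ of non-isomorphic simple right $R$-modules with $|I|\geq 2$ such that ${\rm Soc}(M)=\bigoplus_{i\in I}C_i$ is essential in $M$ and $\End(C_i)\cong\mathbb F_2$ for every $i\in I$. This is exactly the assertion of the corollary, so there is nothing further to prove. There is no genuine obstacle in this argument; the only point worth flagging is that everything rests on $E(M)$ being a direct sum of indecomposable injective modules, which holds because $R$ is right noetherian (Matlis--Papp) and is in any case already used inside the proof of Proposition~\ref{fbn}.
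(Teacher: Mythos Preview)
Your proposal is correct and follows essentially the same approach as the paper's own proof: apply Theorem~\ref{struct} to decompose $M=N\oplus L$, use indecomposability together with the failure of quasi-injectivity to force $M=N$ semiboolean, and then invoke Proposition~\ref{fbn}. The only differences are that you spell out why the hypotheses of Theorem~\ref{struct} are met for injective envelopes and offer an alternative route via Theorem~\ref{noei-struct}, both of which are welcome but do not change the argument.
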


\begin{proof}
We know from Theorem~\ref{struct} that $M=N\oplus L$, where $N$ is a semiboolean module, any element in $\End(E(L))/J(\End(E(L)))$ is the sum of two units and moreover, $\End(E(M)/J(\End(E(M))=\End(E(N)/J(\End(E(N))\times \End(E(L)/J(\End(E(L))$. As we are assuming that $M$ is indecomposable and not quasi-injective, we must have $N=M$. So the result follows from the above proposition.
\end{proof}

Another consequence of Proposition~\ref{fbn} is the following corollary.

\begin{cor} \label{comm-nonF2}
Let $R$ be a commutative noetherian ring with no homomorphic images isomorphic to $\mathbb{F}_2\times \mathbb{F}_2$. Then any automorphism-invariant $R$-module is quasi-injective.
\end{cor}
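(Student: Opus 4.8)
The plan is to argue by contradiction: assuming an automorphism-invariant $R$-module $M$ that is not quasi-injective, I peel off via Theorem~\ref{struct} a nonzero semiboolean automorphism-invariant direct summand $N$ that is again not quasi-injective, feed $N$ into Proposition~\ref{fbn}, and then use the two non-isomorphic two-element simple modules it produces to build a homomorphic image $\mathbb{F}_2\times\mathbb{F}_2$ of $R$ via the Chinese Remainder Theorem, contradicting the hypothesis.

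First, a commutative noetherian ring is trivially right bounded and right noetherian, since every ideal is two-sided; hence Proposition~\ref{fbn} is available. Now suppose $M$ is automorphism-invariant but not quasi-injective. By Theorem~\ref{struct} write $M=N\oplus L$ with $N$ semiboolean and $L$ quasi-injective. A direct summand of an automorphism-invariant module is automorphism-invariant: for the injective case this is elementary, since an automorphism $\varphi$ of $E(N)$ extends, by the identity on $E(L)$, to an automorphism $\varphi\oplus 1_{E(L)}$ of $E(M)=E(N)\oplus E(L)$, which preserves $M$ and hence maps $N$ into $E(N)\cap M=N$; applying the same to $\varphi^{-1}$ gives $\varphi(N)=N$. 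So $N$ is automorphism-invariant. If $N=0$ then $M=L$ is quasi-injective, a contradiction. If $N$ is quasi-injective, then, since $M=N\oplus L$ is automorphism-invariant, its summands $N$ and $L$ are relatively injective by \cite{LZ}, and a direct sum of two mutually relatively injective quasi-injective modules is quasi-injective, so $M$ is quasi-injective, again a contradiction. Therefore $N$ is a nonzero, non-quasi-injective, semiboolean, automorphism-invariant $R$-module.

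Next I apply Proposition~\ref{fbn} to $N$. It yields a set $\{C_i\}_{i\in I}$ of pairwise non-isomorphic simple $R$-modules with $|I|\geq 2$ such that ${\rm Soc}(N)=\bigoplus_{i\in I}C_i$ is essential in $N$ and $\End(C_i)\cong\mathbb{F}_2$ for all $i$; and, because $R$ is commutative, $|C_i|=2$ for every $i$. Over a commutative ring each simple module has the form $C_i\cong R/P_i$ for a maximal ideal $P_i$, with $\End_R(C_i)\cong R/P_i$, so $R/P_i\cong\mathbb{F}_2$, and $C_i\ncong C_j$ forces $P_i\neq P_j$. Choosing two distinct indices $i\neq j$, the maximal ideals $P_i$ and $P_j$ are comaximal, so by the Chinese Remainder Theorem $R/(P_i\cap P_j)\cong R/P_i\times R/P_j\cong\mathbb{F}_2\times\mathbb{F}_2$, contradicting the assumption on $R$. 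Hence no such $M$ exists, i.e.\ every automorphism-invariant $R$-module is quasi-injective.

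All the substantive module theory is inside Proposition~\ref{fbn}, so the point to get right here is the reduction to the semiboolean summand: that automorphism-invariance passes to direct summands, and that gluing two mutually relatively injective quasi-injective modules preserves quasi-injectivity, so that failure of quasi-injectivity for $M$ is inherited by $N$. Once $N$ is in hand, the rest is the short comaximality/CRT computation above, and I anticipate no real obstacle beyond quoting these standard facts about relative injectivity correctly.
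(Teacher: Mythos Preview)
Your proof is correct and follows essentially the same route as the paper's own argument: reduce to the semiboolean summand $N$ via Theorem~\ref{struct}, invoke Proposition~\ref{fbn} to obtain at least two non-isomorphic two-element simple modules, and use comaximality of their annihilators to exhibit a surjection $R\twoheadrightarrow\mathbb{F}_2\times\mathbb{F}_2$. The only cosmetic differences are that you spell out in more detail why $N$ inherits automorphism-invariance and non-quasi-injectivity (via relative injectivity from \cite{LZ}), and you phrase the final step as the Chinese Remainder Theorem rather than constructing the surjection by hand as the paper does.
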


\begin{proof}
Assume on the contrary that $M$ is an automorphism-invariant $R$-module which is not quasi-injective. Again we know from Theorem~\ref{struct} that $M=N\oplus L$, where $N$ is a semiboolean module, any element in $\End(E(L))/J(\End(E(L)))$ is the sum of two units and $\End(E(M)/J(\End(E(M))=\End(E(N)/J(\End(E(N))\times \End(E(L)/J(\End(E(L))$. So $N$ must be also an automorphism-invariant module that is not quasi-injective. Applying now Proposition~\ref{fbn}, we deduce that there exists a set $\{C_i\}_{i\in I}$ of non-isomorphic simple right $R$-modules with $|I|\geq 2$ such that ${\rm Soc}(N)=\oplus_{i\in I}C_i$ is essential in $N$ and $C_i$ has only two elements for every $i\in I$.

Write $C_i=\{0, c_i\}$ for every $i\in I$. Let $p_i:R\rightarrow C_i$ be the homomorphism defined by $p_i(1)=c_i$ and  call $N_i=\Ker(p_i)$. Then $N_i$ is a maximal ideal of $R$. 

Finally, as $|I|\geq 2$, there exist at least two different indexes $i_1,i_2\in I$. Define then $p:R\rightarrow R/N_{i_1}\times R/N_{i_2}$ by $p(1)=p_{i_1}(1)\times p_{i_2}(1)$. Clearly $p$ is a ring homomorphism and it is easy to check that $R/N_{i_1}\times R/N_{i_2}$ is isomorphic to the ring $\mathbb{F}_2\times \mathbb{F}_2$. Let us check that $p$ is surjective.
As  $N_{i_1}, N_{i_2}$ are different maximal deals, there exist elements $r_{i_1}\in N_{i_1}\setminus N_{i_2}$  and $r_{i_2}\in N_{i_2}\setminus N_{i_1}$. And this means that $p(r_{i_1})=c_{i_1}\times 0$ and $p(r_{i_2})=0\times c_{i_2}$. Therefore, $p$ is surjective. This yields a contradiction to our assumption that $R$ has no homomorphic images isomorphic to $\mathbb{F}_2\times \mathbb{F}_2$. Thus, it follows that any automorphism-invariant $R$-module must be quasi-injective.
\end{proof}

In particular, this yields

\begin{cor} \label{local}
Over a commutative noetherian local ring, any automorphism-invariant module is quasi-injective.
\end{cor}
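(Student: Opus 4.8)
The plan is to deduce this immediately from Corollary~\ref{comm-nonF2}, so the only point to verify is that a commutative noetherian local ring $R$ admits no homomorphic image isomorphic to $\mathbb{F}_2\times\mathbb{F}_2$. First I would record the standard fact that any proper quotient of a local ring is again local: if $\mathfrak{m}$ denotes the maximal ideal of $R$ and $I$ is a proper ideal, then every element of $R/I$ outside $\mathfrak{m}/I$ lifts to an element of $R$ outside $\mathfrak{m}$, hence to a unit, and so is itself a unit; thus $\mathfrak{m}/I$ is the unique maximal ideal of $R/I$. On the other hand $\mathbb{F}_2\times\mathbb{F}_2$ is not local, as it contains the nontrivial idempotent $(1,0)$ and has the two distinct maximal ideals $\mathbb{F}_2\times 0$ and $0\times\mathbb{F}_2$. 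Consequently no quotient of $R$ can be isomorphic to $\mathbb{F}_2\times\mathbb{F}_2$, and Corollary~\ref{comm-nonF2} then yields that every automorphism-invariant $R$-module is quasi-injective.

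There is essentially no obstacle in this last step; all the real work has been carried out in Proposition~\ref{fbn} and Corollary~\ref{comm-nonF2}. If one prefers to avoid invoking the ``quotients of local rings are local'' observation, the same conclusion follows by a direct contradiction argument: a surjection $R\to\mathbb{F}_2\times\mathbb{F}_2$ would pull the two maximal ideals of $\mathbb{F}_2\times\mathbb{F}_2$ back to two distinct maximal ideals of $R$, contradicting the hypothesis that $R$ is local. Either way the statement is a one-line consequence of the preceding corollary.
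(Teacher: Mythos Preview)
Your proposal is correct and matches the paper's own proof essentially verbatim: the paper argues that a surjection $R\to\mathbb{F}_2\times\mathbb{F}_2$ would yield two distinct maximal ideals of $R$, contradicting locality, and then invokes Corollary~\ref{comm-nonF2}. Your ``quotients of local rings are local'' phrasing is just a minor repackaging of the same observation.
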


\begin{proof}
If $R$ has a homomorphic image isomorphic to $\mathbb{F}_2\times \mathbb{F}_2$, then it has at last two distinct maximal ideals and so it cannot be local. This shows that if $R$ is a commutative noetherian local ring, then it has no homomorphic images isomorphic to $\mathbb{F}_2\times \mathbb{F}_2$ and consequently, any automorphism-invariant $R$-module is quasi-injective. 
\end{proof}

\begin{rem}
Let us note that any finite-dimensional algebra over a field $K$ is a left and right FBN ring by \cite[Proposition~9.1(a)]{GW}. Therefore, Examples~\ref{noqi} and \ref{noqi2} show that a finitely generated automorphism-invariant module over a (two-sided) FBN ring $R$ does not need to be quasi-injective.  
\end{rem}

Our next step will be to show that any automorphism invariant module over a commutative noetherian ring $R$ is quasi-injective. In order to prove it, we will first show that being automorphism-invariant is a local property for modules over commutative noetherian rings. 

\begin{prop}\label{local-ai}
Let $R$ be a commutative noetherian ring and $M,$ an $R$-module. The following are equivalent.
\begin{enumerate}
\item $M$ is automorphism-invariant.
\item $M_{\mathfrak p}$ is an automorphism-invariant $R_{\mathfrak p}$-module for every prime ideal ${\mathfrak p}$.
\item $M_{\mathfrak m}$ is an automorphism-invariant $R_{\mathfrak m}$-module for every maximal ideal ${\mathfrak m}$.
\end{enumerate}
\end{prop}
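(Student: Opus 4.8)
The plan is to prove $(1)\Rightarrow(2)\Rightarrow(3)\Rightarrow(1)$, the implication $(2)\Rightarrow(3)$ being trivial since maximal ideals are prime. The whole argument rests on the classical behaviour of injective envelopes under localization over a commutative noetherian ring, which I would record first as a lemma (or simply quote from the standard theory of injective modules over noetherian rings): for every prime $\mathfrak p$ of $R$ and every $R$-module $M$, the localization $E_R(M)_{\mathfrak p}$ is an injective $R_{\mathfrak p}$-module and the localized inclusion $M_{\mathfrak p}\hookrightarrow E_R(M)_{\mathfrak p}$ is an injective envelope; moreover, writing $E_R(M)=\bigoplus_{\mathfrak q}E(R/\mathfrak q)^{(\alpha_{\mathfrak q})}$ by Matlis' theorem, the indecomposable summand $E(R/\mathfrak q)$ has its $R$-action factoring through $R_{\mathfrak p}$ when $\mathfrak q\subseteq\mathfrak p$, and satisfies $E(R/\mathfrak q)_{\mathfrak p}=0$ when $\mathfrak q\not\subseteq\mathfrak p$. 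Hence $E_R(M)=A\oplus B$ as $R$-modules, where $A:=\bigoplus_{\mathfrak q\subseteq\mathfrak p}E(R/\mathfrak q)^{(\alpha_{\mathfrak q})}$ is $\mathfrak p$-local with $A\cong A_{\mathfrak p}\cong E_R(M)_{\mathfrak p}=E_{R_{\mathfrak p}}(M_{\mathfrak p})$ and $B_{\mathfrak p}=0$; and, since the $R$-action on $A$ factors through $R_{\mathfrak p}$, we have $\End_R(A)=\End_{R_{\mathfrak p}}(A)$, so in particular $\mathrm{Aut}_R(A)=\mathrm{Aut}_{R_{\mathfrak p}}(A)$.

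For $(1)\Rightarrow(2)$: fix a prime $\mathfrak p$ and an $R_{\mathfrak p}$-automorphism $\psi$ of $E_{R_{\mathfrak p}}(M_{\mathfrak p})\cong A$. By the previous paragraph $\psi$ is also an $R$-automorphism of $A$, hence $\widetilde\psi:=\psi\oplus\mathrm{id}_B$ is an $R$-automorphism of $E_R(M)=A\oplus B$. Since $M$ is automorphism-invariant over $R$, $\widetilde\psi(M)=M$. Applying the exact functor $-\otimes_R R_{\mathfrak p}$, using that localization carries the submodule $\widetilde\psi(M)$ to $\widetilde\psi_{\mathfrak p}(M_{\mathfrak p})$, that $B_{\mathfrak p}=0$, and that under the identification $E_R(M)_{\mathfrak p}\cong A$ the map $\widetilde\psi_{\mathfrak p}$ is exactly $\psi$, we obtain $\psi(M_{\mathfrak p})=M_{\mathfrak p}$. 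As $\psi$ was arbitrary, $M_{\mathfrak p}$ is automorphism-invariant over $R_{\mathfrak p}$.

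For $(3)\Rightarrow(1)$: let $\phi$ be an $R$-automorphism of $E:=E_R(M)$. It is enough to show $\phi(M)\subseteq M$, for then applying the same to $\phi^{-1}$ yields $\phi(M)=M$. Since $R$ is commutative, $\phi(M)\subseteq M$ holds if and only if $\bigl((\phi(M)+M)/M\bigr)_{\mathfrak m}=0$ for every maximal ideal $\mathfrak m$, that is, if and only if $\phi(M)_{\mathfrak m}\subseteq M_{\mathfrak m}$ inside $E_{\mathfrak m}$ for every $\mathfrak m$. But $\phi(M)_{\mathfrak m}=\phi_{\mathfrak m}(M_{\mathfrak m})$, where $\phi_{\mathfrak m}$ is an $R_{\mathfrak m}$-automorphism of $E_R(M)_{\mathfrak m}=E_{R_{\mathfrak m}}(M_{\mathfrak m})$; by hypothesis $(3)$, $\phi_{\mathfrak m}(M_{\mathfrak m})=M_{\mathfrak m}$, so $\phi(M)_{\mathfrak m}\subseteq M_{\mathfrak m}$ for all $\mathfrak m$, which finishes the argument.

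The substantive part of the work is entirely in the preliminary paragraph — verifying that localization commutes with the formation of injective envelopes over a noetherian ring and that the Matlis decomposition isolates the $\mathfrak p$-local part as an $R$-module direct summand; both facts are classical. Once those identifications are fixed, the two nontrivial implications reduce to short diagram chases. The point to handle with most care is $(1)\Rightarrow(2)$: one must observe that an $R_{\mathfrak p}$-linear automorphism of the $\mathfrak p$-local summand $A$ is automatically $R$-linear, which is precisely what allows it to be extended by the identity on $B$ to a genuine automorphism of $E_R(M)$ to which the global hypothesis applies.
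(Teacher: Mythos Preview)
Your proof is correct and follows essentially the same route as the paper's: for $(1)\Rightarrow(2)$ both arguments split $E_R(M)$ into a $\mathfrak p$-local direct summand (which you isolate via Matlis' theorem, and the paper via the hereditary torsion theory for $S_{\mathfrak p}=R\setminus\mathfrak p$, citing Stenstr\"om) plus a complement killed by localization, then extend an automorphism of the local part by the identity; for $(3)\Rightarrow(1)$ both arguments localize the quotient measuring failure of invariance and use that $E_R(M)_{\mathfrak m}=E_{R_{\mathfrak m}}(M_{\mathfrak m})$. The only cosmetic difference is the language used to obtain the decomposition $E_R(M)=A\oplus B$.
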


\begin{proof}
$(1) \Rightarrow (2)$. Let $\mathfrak p$ be a prime ideal and assume that $M$ is an automorphism-invariant module. Call $E=E(M)$. As the torsion submodules in $R$-Mod for the multiplicative set $S_{\mathfrak p}=R\setminus{\mathfrak p}$ are closed under injective envelopes (see e.g. \cite[Proposition 4.5 (i)]{St}), we get that $E=t(E)\oplus E'$, where $t(E)$ is the torsion submodule of $E$ and $E'=E(M_{\mathfrak p})$ is the injective envelope of $M_{\mathfrak p}$. Let now $f:E'\rightarrow E'$ be an automorphism of $E'$. 
Then the diagonal homomorphism $1_{t(M)}\oplus f: E\rightarrow E$ is an isomorphism of $E$ and thus, $(1_{t(M)}\oplus f)(M)\subseteq M$ by hypothesis. 
And this means that $f(M_{\mathfrak p})=(1_{t(M)}\oplus f)_{\mathfrak p}(M_{\mathfrak p})\subseteq M_{\mathfrak p}$. So $M_{\mathfrak p}$ is automorphism-invariant.
\medskip

$(2) \Rightarrow (3)$. This is trivial.
\medskip

$(3) \Rightarrow (1)$. Let now fix an automorphism $f:E(M)\rightarrow E(M)$ and choose a maximal ideal ${\mathfrak m}$ of $R$. The localization $f_{\mathfrak m}$ of $f$ at $\mathfrak m$ is also an isomorphism.
And localizing at ${\mathfrak m}$ the short exact sequence
\[M\stackrel{f|_M}\to M+f(M)\to (M+f(M))/f(M)\to 0,\]
we obtain the short exact sequence 
\[M_{\mathfrak{m}}\stackrel{f_{\mathfrak m}|_{M_{\mathfrak{m}}}}\to M_{\mathfrak{m}}+f(M)_{\mathfrak{m}}=M_{\mathfrak{m}}+f_{\mathfrak{m}}(M_{\mathfrak{m}})\to (M_{\mathfrak{m}}+f_{\mathfrak{m}}(M_{\mathfrak{m}}))/f_\mathfrak{m}(M_\mathfrak{m})\to 0.\]
As  $M_{\mathfrak m}$ is automorphism invariant, we deduce that $(M_{\mathfrak{m}}+f_{\mathfrak{m}}(M_{\mathfrak{m}}))/f_\mathfrak{m}(M_\mathfrak{m})=0$. Therefore, $(M+f(M)/f(M))_{\mathfrak m}=0$ for every maximal ideal $\mathfrak m$; and this means that $(M+f(M))/f(M)=0$. Thus, $M$ is invariant under $f$.
\end{proof}

\noindent Bearing in mind that a module $M$ is quasi-injective if and only if it is invariant under any endomorphism of its injective envelope, the above arguments can be easily adapted to prove:

\begin{prop}\label{local-qi}
Let $R$ be a commutative noetherian ring and $M,$ an $R$-module. The following are equivalent.
\begin{enumerate}
\item $M$ is quasi-injective.
\item $M_{\mathfrak p}$ is a quasi-injective $R_{\mathfrak p}$-module for every prime ideal ${\mathfrak p}$.
\item $M_{\mathfrak m}$ is a quasi-injective $R_{\mathfrak m}$-module for every maximal ideal ${\mathfrak m}$.
\end{enumerate}
\end{prop}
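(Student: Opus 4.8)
The plan is to mirror, essentially verbatim, the proof of Proposition~\ref{local-ai}, using the Johnson--Wong characterization recalled just above: a module is quasi-injective exactly when it is invariant under \emph{every} endomorphism of its injective envelope. The only substantive difference with the automorphism-invariant case is that endomorphisms need not be invertible, and I will point out below where this forces a (minor) change of bookkeeping.

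For $(1)\Rightarrow(2)$ I would fix a prime ${\mathfrak p}$, set $E=E(M)$, and invoke, as in the proof of Proposition~\ref{local-ai}, the fact that $S_{\mathfrak p}$-torsion submodules (for $S_{\mathfrak p}=R\setminus{\mathfrak p}$) are closed under injective envelopes \cite[Proposition~4.5(i)]{St} to write $E=t(E)\oplus E'$ with $E'=E(M_{\mathfrak p})$ the injective envelope of $M_{\mathfrak p}$ over $R_{\mathfrak p}$. Given any endomorphism $f$ of $E'$ --- which is automatically $R_{\mathfrak p}$-linear, $E'$ being an $R_{\mathfrak p}$-module, so that $\End_R(E')=\End_{R_{\mathfrak p}}(E')$ --- the map $1_{t(E)}\oplus f$ is an endomorphism of $E$, so $M$ is invariant under it by hypothesis; localizing this containment at ${\mathfrak p}$, where $t(E)_{\mathfrak p}=0$ and $E'_{\mathfrak p}=E'$ and where localization commutes with images, gives $f(M_{\mathfrak p})\subseteq M_{\mathfrak p}$. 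Hence $M_{\mathfrak p}$ is quasi-injective. The implication $(2)\Rightarrow(3)$ is immediate.

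For $(3)\Rightarrow(1)$ I would fix an endomorphism $f$ of $E(M)$ and aim to show that the submodule $(M+f(M))/M$ of $E(M)/M$ is zero, which is exactly the assertion $f(M)\subseteq M$; and this can be verified after localizing at each maximal ideal ${\mathfrak m}$. Since localization is exact and commutes with images and sums,
\[\big((M+f(M))/M\big)_{\mathfrak m}\;=\;\big(M_{\mathfrak m}+f_{\mathfrak m}(M_{\mathfrak m})\big)/M_{\mathfrak m},\]
with $f_{\mathfrak m}$ the localization of $f$; by the same torsion-theoretic decomposition (now at $S_{\mathfrak m}$) one has $E(M)_{\mathfrak m}\cong E_{R_{\mathfrak m}}(M_{\mathfrak m})$ and $f_{\mathfrak m}$ is an endomorphism of it, so $f_{\mathfrak m}(M_{\mathfrak m})\subseteq M_{\mathfrak m}$ by quasi-injectivity of $M_{\mathfrak m}$ and the right-hand side vanishes. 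This is precisely where the argument is \emph{cleaner} than its automorphism-invariant counterpart: here we may quotient by $M$ directly, whereas the proof of Proposition~\ref{local-ai} quotiented by $f(M)$ and therefore needed $f_{\mathfrak m}$ to be invertible in order to conclude.

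I do not expect a real obstacle here: the statement is a near-mechanical transcription of Proposition~\ref{local-ai}. The one point worth stating carefully --- and the step where noetherianness of $R$ is genuinely used --- is the identification $E(M)_{\mathfrak m}\cong E_{R_{\mathfrak m}}(M_{\mathfrak m})$, together with the observation that $f_{\mathfrak m}$ does act on this module because the torsion summand $t(E(M))$ is annihilated by the localization while the injective, torsion-free summand survives unchanged. This is exactly the input already justified via \cite[Proposition~4.5(i)]{St} in the proof of Proposition~\ref{local-ai}, so no new ingredient is required.
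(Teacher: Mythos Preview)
Your proposal is correct and follows precisely the approach the paper itself indicates: it adapts the proof of Proposition~\ref{local-ai} verbatim, replacing automorphisms by arbitrary endomorphisms via the Johnson--Wong characterization. You even identify the one spot where the bookkeeping must change---quotienting by $M$ rather than by $f(M)$ in $(3)\Rightarrow(1)$, since $f_{\mathfrak m}$ need not be invertible---which is exactly the ``easy adaptation'' the paper alludes to without spelling out.
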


\noindent We can now extend Corollary \ref{comm-nonF2} and Corollary \ref{local} and prove the following.

\begin{thm} \label{comm-noetherian}
Let $R$ be a commutative noetherian ring. Then any automorphism-invariant $R$-module is quasi-injective.
\end{thm}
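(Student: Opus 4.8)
The plan is to deduce this from the three immediately preceding results by a standard ``check everything locally'' argument. Given a commutative noetherian ring $R$ and an automorphism-invariant $R$-module $M$, I would fix an arbitrary maximal ideal $\mathfrak m$ of $R$ and pass to the localization $M_{\mathfrak m}$ over $R_{\mathfrak m}$.

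First, I would invoke Proposition~\ref{local-ai}: since $M$ is automorphism-invariant over $R$, its localization $M_{\mathfrak m}$ is an automorphism-invariant $R_{\mathfrak m}$-module (this is the implication $(1)\Rightarrow(3)$ of that proposition). Next, I would observe that $R_{\mathfrak m}$ is again a commutative noetherian ring, since localization preserves the noetherian property, and it is local by construction. Hence Corollary~\ref{local} applies to $R_{\mathfrak m}$ and tells us that $M_{\mathfrak m}$ is a quasi-injective $R_{\mathfrak m}$-module. Since $\mathfrak m$ was an arbitrary maximal ideal, condition~(3) of Proposition~\ref{local-qi} is satisfied, and therefore that proposition yields that $M$ is quasi-injective as an $R$-module, which is exactly the desired conclusion.

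There is no serious obstacle here: all of the real work has already been carried out in establishing Proposition~\ref{local-ai} (that automorphism-invariance is a local property for modules over commutative noetherian rings), Corollary~\ref{local} (the local case, which itself rests on Corollary~\ref{comm-nonF2} and Proposition~\ref{fbn}), and Proposition~\ref{local-qi} (that quasi-injectivity is a local property). The only point requiring a moment's care is checking that $R_{\mathfrak m}$ satisfies the hypotheses of Corollary~\ref{local}, namely that it remains noetherian and local; both facts are immediate. One could optionally note that the same argument goes through with maximal ideals replaced by prime ideals, using conditions~(2) of the two localization propositions instead of conditions~(3).
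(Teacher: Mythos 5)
Your argument is correct and is essentially the same as the paper's own proof: localize, apply Proposition~\ref{local-ai}, note $R_{\mathfrak m}$ (resp.\ $R_{\mathfrak p}$) is a commutative noetherian local ring so Corollary~\ref{local} gives quasi-injectivity of the localization, and conclude via Proposition~\ref{local-qi}. The only cosmetic difference is that the paper localizes at prime ideals while you use maximal ideals, a variation you yourself point out is immaterial.
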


\begin{proof}
Let $M$ be an automorphism-invariant $R$-module. Then $M_{\mathfrak p}$ is an automorphism-invariant  $R_{\mathfrak p}$-module for every prime ideal $\mathfrak p$ by Proposition~\ref{local-ai}. As each $R_{\mathfrak p}$ is local, we deduce from Corollary~\ref{local} that $M_{\mathfrak p}$ is quasi-injective for every prime ideal $\mathfrak p$. And therefore, $M$ is quasi-injective by Proposition~\ref{local-qi}.
\end{proof}

\begin{rem} \rm
In \cite{BCS} it was asked whether in the case of abelian groups, the notions of automorphism-invariance and quasi-injectivity coincide. Our Theorem \ref{comm-noetherian} answers this open question in the affirmative. 
\end{rem}

\begin{rems} 
\begin{enumerate}
\item Teply constructed in \cite{Teply}  an example of a non-finitely generated automorphism-invariant module over the commutative ring $\mathbb{Z}[x_1, x_2, \ldots, x_n, \ldots]$ which is not quasi-injective. Therefore, we cannot expect to extend our previous theorem to  modules over non-noetherian commutative rings.

\item On the other hand, the ring $R$ of all eventually constant sequences of elements of the field $\mathbb{F}_2$ is a commutative boolean ring which is automorphism-invariant as a module over itself, but it is not self-injective (see \cite{ESS}). This shows that we cannot drop the noetherian condition from the hypotheses of the above theorem, even if we assume that the ring is boolean and the module is finitely generated and non-singular.

\item It is straightforward to check that being automorphism-invariant is a Morita-invariant property. Therefore, the statement in Theorem~\ref{comm-noetherian} is also valid if we
replace ``commutative noetherian ring $R$" by ``a full  matrix ring $\mathbb{M}_n(R)$ over a commutative noetherian ring $R$". 

\item In \cite[p. 362]{JS}, an example of a module over an HNP ring is given claiming that this module is pseudo-injective (equivalently, automorphism-invariant) but not quasi-injective. We would like to point out that this example is incorrect. Let us choose as the field $\Phi$ in that example the field $\mathbb C$ of complex numbers.  Then $B$ is a $\mathbb C$-algebra and thus, $\End_B(M)$ is also a $\mathbb C$-algebra. Therefore, $\End(M)$ cannot have a homomorphic image isomorphic to $\mathbb F_2$. This means that $M$ must be quasi-injective by Theorem~\ref{noF2} (see also \cite[Theorem~3]{GS2}), thus contradicting the assertion in \cite[p. 362]{JS} that $M$ is not quasi-injective. Since this alleged counter-example is not correct, we would like to pose the question: whether every torsion automorphism-invariant module (or even any automorphism-invariant module) over an HNP ring is quasi-injective.
\end{enumerate}

\end{rems}

\section{When is an $\mathcal X$-automorphism coinvariant module $\mathcal X$-endomorphism coinvariant.}

\noindent We finish this paper by dualizing the results obtained in the previous sections for $\mathcal X$-automorphism coinvariant modules. Clearly, any $\mathcal X$-endomorphism coinvariant module is $\mathcal X$-automorphism coinvariant. We give below an example that shows that the converse does not need to hold.

Following the convention in the previous sections for injective modules, when $\mathcal X$ is the class of projective modules, we will call $\mathcal X$-automorphism coinvariant (resp. $\mathcal X$-endomorphism coinvariant) modules just automorphism-coinvariant (resp., quasi-projective) modules. Recently it has been shown in \cite{GKBS} that over a right perfect ring, a module $M$ is automorphism-coinvariant if and only if for every submodule $N$ of $M$, any epimorphism $\varphi: M\rightarrow M/N$ can be lifted to an endomorphism of $M$.

\begin{ex} \label{no-qp}
Let $R$ be the ring given in Example~\ref{noqi} and $M=e_{11}R$. As $R$ is a finite-dimensional algebra over $\mathbb{F}_2$, the functors
$$\Hom_{\mathbb{F}_2}(-,\mathbb{F}_2):\ModR \rightarrow \RMod$$
and
$$\Hom_{\mathbb{F}_2}(-,\mathbb{F}_2):\RMod \rightarrow \ModR$$
establish a contravariant equivalence between the subcategories of left and right finitely generated modules over $R$. Moreover, as $M$ is a finitely generated right $R$-module, its injective envelope $E(M)$ is also finitely generated.

Therefore, $\Hom_{\mathbb{F}_2}(E(M),\mathbb{F}_2)$ is the projective cover of $\Hom_{\mathbb{F}_2}(M,\mathbb{F}_2)$ and we deduce that $\Hom_{\mathbb{F}_2}(M,\mathbb{F}_2)$ is an automorphism-coinvariant left $R$-module.
Moreover, it may be noticed that $\Hom_{\mathbb{F}_2}(M,\mathbb{F}_2)$ is not invariant under endomorphisms of its projective cover
$\Hom_{\mathbb{F}_2}(E(M),\mathbb{F}_2)$ because otherwise $M\cong \Hom_{\mathbb{F}_2}(\Hom_{\mathbb{F}_2}(M,\mathbb{F}_2),\mathbb{F}_2)$ would be invariant under endomorphisms of $E(M)\cong \Hom_{\mathbb{F}_2}(\Hom_{\mathbb{F}_2}(M,\mathbb{F}_2),\mathbb{F}_2)$. Therefore, $\Hom_{\mathbb{F}_2}(M,\mathbb{F}_2)$ is not quasi-projective.
\end{ex}

Using similar arguments as in Lemma \ref{indecomp-1}, Proposition \ref{indecomp} and Theorem \ref{noei-struct}, we can prove.

\begin{lem} \label{dual-indecomp-1}
Let $M$ be an $\mathcal X$-automorphism coinvariant module with an epimorphic $\mathcal X$-cover $p:X\rightarrow M$ such that $\End(X)/J(\End(X))$ is von Neumann regular right self-injective and idempotents lift modulo $J(\End(X))$. If $X$ is indecomposable, then $M$ is $\mathcal X$-endomorphism coinvariant.
\end{lem}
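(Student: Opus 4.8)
The plan is to follow, \emph{mutatis mutandis}, the proof of Lemma~\ref{indecomp-1}, dualizing the role played there by invariance of $u(M)$ under automorphisms of the envelope. For a cover $p:X\to M$ the relevant notion is that $M$ is coinvariant under $g\in\End(X)$ precisely when $g(\Ker p)\subseteq\Ker p$, equivalently when there is $\bar g\in\End(M)$ with $p\circ g=\bar g\circ p$. So I first pin down the structure of $\End(X)$: since $X$ is indecomposable, the only idempotents of $\End(X)$ are $0$ and $1$, and since idempotents lift modulo $J(\End(X))$, the ring $\End(X)/J(\End(X))$ has no non-trivial idempotents either; being in addition von Neumann regular, it is a division ring. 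By \cite{KS2}, every element of a division ring is a sum of two or three units, hence so is every element of $\End(X)/J(\End(X))$; lifting through $J(\End(X))$ (a unit of the quotient lifts to a unit of $\End(X)$, and a unit plus an element of $J(\End(X))$ is again a unit) shows that each element of $\End(X)$ is itself a sum of two or three units of $\End(X)$, exactly as in Lemma~\ref{indecomp-1}.

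Next I would use automorphism-coinvariance to pass from automorphisms to arbitrary endomorphisms. Put $K=\Ker p$. Since $M$ is $\mathcal X$-automorphism coinvariant, every automorphism $\sigma$ of $X$ satisfies $\sigma(K)\subseteq K$ (and, applying this to $\sigma^{-1}$ as well, $\sigma(K)=K$). Now take an arbitrary $\varphi\in\End(X)$ and write $\varphi=\sigma_1+\sigma_2$ or $\varphi=\sigma_1+\sigma_2+\sigma_3$ with each $\sigma_i$ an automorphism of $X$. Then $\varphi(K)\subseteq\sigma_1(K)+\sigma_2(K)+\sigma_3(K)=K$ (omitting the last summand in the two-term case), so $\varphi(K)\subseteq K$ and $\varphi$ descends to an endomorphism of $M$. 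As $\varphi$ was arbitrary, $M$ is $\mathcal X$-endomorphism coinvariant.

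The only slightly delicate point is the unit-lifting step for $\End(X)$, but this is routine since $J(\End(X))$ is the Jacobson radical --- and it is already invoked without detail in Lemma~\ref{indecomp-1} --- so I do not anticipate a genuine obstacle; the remainder is a direct dualization of that lemma.
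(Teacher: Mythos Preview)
Your argument is correct and is precisely the dualization the paper intends: the paper does not write out a separate proof here but merely says to repeat the reasoning of Lemma~\ref{indecomp-1}, and your write-up does exactly that, with the extra (and correct) unpacking that coinvariance under $g$ amounts to $g(\Ker p)\subseteq\Ker p$. The only cosmetic point is that the step ``$X$ indecomposable $\Rightarrow$ $\End(X)/J(\End(X))$ has no non-trivial idempotents'' can be stated directly from idempotent lifting without first discussing idempotents of $\End(X)$, but your version is equally valid.
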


\begin{prop} \label{dual-indecomp}
Let $M$ be an indecomposable $\mathcal X$-automorphism coinvariant module with an epimorphic $\mathcal X$-cover $p:X\rightarrow M$ such that $\End(X)/J(\End(X))$ is von Neumann regular right self-injective and idempotents lift modulo $J(\End(X))$. Then the following statements are equivalent:
\begin{enumerate}
\item $M$ is $\mathcal X$-endomorphism coinvariant.
\item $X$ is indecomposable.
\end{enumerate}
\end{prop}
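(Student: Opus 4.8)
The implication $(2)\Rightarrow(1)$ is exactly Lemma~\ref{dual-indecomp-1}, so the whole content lies in $(1)\Rightarrow(2)$. The plan is to run the dual of the argument used for Proposition~\ref{indecomp}: rather than detecting idempotents of $\End(X)$ through invariant submodules of $M$, I will detect them through submodules of $\Ker p$, and show that $\mathcal X$-endomorphism coinvariance of $M$ together with indecomposability of $M$ forces the only idempotents of $\End(X)$ to be $0$ and $1$, whence $X$ is indecomposable.

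First I would fix an idempotent $e=e^2\in\End(X)$ and note that, since $M$ is invariant under the endomorphisms $e$ and $1-e$ of its cover $p\colon X\to M$, both $e$ and $1-e$ carry $\Ker p$ into itself. (This is the translation of the relation $p\circ g=f\circ p$ recalled at the start of Section~3: for $g\in\End(X)$ such an $f\in\End(M)$ exists precisely when $g(\Ker p)\subseteq\Ker p$.) Consequently, with respect to $X=eX\oplus(1-e)X$ we get $\Ker p=(\Ker p\cap eX)\oplus(\Ker p\cap(1-e)X)$, and dividing by $\Ker p$ produces a direct sum decomposition
$$M\;\cong\;\bigl(eX/(eX\cap\Ker p)\bigr)\ \oplus\ \bigl((1-e)X/((1-e)X\cap\Ker p)\bigr).$$

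Next I would use that $M$ is indecomposable: one of the two summands must be zero. Say $(1-e)X\subseteq\Ker p$ (the other case is symmetric, with the roles of $e$ and $1-e$ interchanged). Then $p(x)=p(ex)+p((1-e)x)=p(ex)$ for every $x\in X$, i.e.\ $p\circ e=p$; by the minimality of the $\mathcal X$-cover, any $h\in\End(X)$ with $p\circ h=p$ is an automorphism, and applying this to $h=e$ together with $e^2=e$ forces $e=1$. Thus $\End(X)$ has no non-trivial idempotents and $X$ is indecomposable.

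The only points that are not purely formal here are the two dictionary entries invoked above: that coinvariance of $M$ under $g\in\End(X)$ is equivalent to $g(\Ker p)\subseteq\Ker p$, and that minimality of a cover upgrades an endomorphism fixing $p$ to an automorphism. Both are built into the framework of \cite{GKS} recalled in Section~3, so beyond Lemma~\ref{dual-indecomp-1} no new ingredient is needed; the only real care required is in dualizing the bookkeeping correctly.
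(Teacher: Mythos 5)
Your proof is correct and takes essentially the route the paper intends: the paper gives no separate argument for Proposition~\ref{dual-indecomp} but simply declares it dual to Proposition~\ref{indecomp}, and your argument is exactly that dualization, using $(2)\Rightarrow(1)$ from Lemma~\ref{dual-indecomp-1} and detecting idempotents of $\End(X)$ through their action on $\Ker p$. Your write-up in fact supplies the details the paper leaves implicit, namely the translation of coinvariance into $e(\Ker p)\subseteq \Ker p$ and the appeal to minimality of the cover (via $p\circ e=p$ or $p\circ(1-e)=p$) to force $e\in\{0,1\}$.
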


\begin{thm} \label{dual-noei-struct}
Let $M$ be an indecomposable $\mathcal X$-automorphism coinvariant module with an epimorphic $\mathcal X$-cover $p:X\rightarrow M$ such that $\End(X)/J(\End(X))$ is von Neumann regular right self-injective and idempotents lift modulo $J(\End(X))$. Assume that $M$ is not $\mathcal X$-endomorphism coinvariant. Then $\End(M)/J(\End(M))\cong \mathbb F_2$ and $\End(X)/J(\End(X))$ has a homomorphic image isomorphic to $\mathbb F_2 \times \mathbb F_2$.
\end{thm}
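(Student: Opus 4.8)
The plan is to run the proof of Theorem~\ref{noei-struct} in the dual setting, replacing the monomorphic $\mathcal X$-envelope $u\colon M\to X$ by the epimorphic $\mathcal X$-cover $p\colon X\to M$ and invoking the covering/coinvariant (``resp.'') clauses of Theorem~\ref{struct}, Lemma~\ref{dual-indecomp-1} and Proposition~\ref{dual-indecomp} in place of their invariant analogues. There are two things to prove: that $\End(M)/J(\End(M))\cong\mathbb F_2$, and that $\End(X)/J(\End(X))$ has a homomorphic image isomorphic to $\mathbb F_2\times\mathbb F_2$.

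For the first claim I would apply the covering form of Theorem~\ref{struct}: $\End(M)/J(\End(M))$ is von Neumann regular, idempotents lift modulo $J(\End(M))$, and $M=N\oplus L$ with $N$ semiboolean and $L$ an $\mathcal X$-endomorphism coinvariant module. Since $M$ is indecomposable one of the summands is zero; if $L=M$ then $M$ is $\mathcal X$-endomorphism coinvariant, contrary to hypothesis, so $N=M$ and $\End(M)/J(\End(M))$ is a Boolean ring. Every element of a Boolean ring is idempotent, and by indecomposability of $M$ together with the lifting of idempotents the only idempotents of $\End(M)/J(\End(M))$ are $0$ and $1$; hence $\End(M)/J(\End(M))\cong\mathbb F_2$. (Equivalently, exactly as in the proof of Theorem~\ref{noei-struct}, $\End(M)/J(\End(M))$ is a division ring because $M$ is indecomposable, and the coinvariant analogue of Theorem~\ref{noF2} shows that $\End(M)$ has a homomorphic image isomorphic to $\mathbb F_2$.)

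For the second claim I would argue by contradiction. If $\End(X)/J(\End(X))$ had no homomorphic image isomorphic to $\mathbb F_2\times\mathbb F_2$, then by \cite{KS2} --- exactly the statement quoted in the proof of Theorem~\ref{noei-struct} --- every element of $\End(X)$ would be a sum of two or three units of $\End(X)$. Writing $K=\ker p$, the hypothesis that $M$ is $\mathcal X$-automorphism coinvariant means that $g(K)\subseteq K$ for every automorphism $g$ of $X$, so for an arbitrary $h\in\End(X)$, writing $h=g_1+g_2$ or $h=g_1+g_2+g_3$ with the $g_i$ automorphisms of $X$, we get $h(K)\subseteq\sum_i g_i(K)\subseteq K$; thus $M$ is coinvariant under every endomorphism of $X$, i.e.\ $\mathcal X$-endomorphism coinvariant, contradicting the hypothesis. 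Hence $\End(X)/J(\End(X))$ has a homomorphic image isomorphic to $\mathbb F_2\times\mathbb F_2$. The whole argument is a routine dualization; the only step needing any care is this last one, where one must verify that stability of $\ker p$ under each automorphism summand of $h$ forces stability under $h$ itself --- this is precisely where the additive unit structure of the von Neumann regular right self-injective ring $\End(X)/J(\End(X))$, and the coinvariance (and not merely the decomposition) of $M$, come into play.
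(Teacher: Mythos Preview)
Your proposal is correct and matches the paper's intended argument: the paper does not give a separate proof of Theorem~\ref{dual-noei-struct} but simply states that it follows by the same reasoning as Lemma~\ref{indecomp-1}, Proposition~\ref{indecomp}, and Theorem~\ref{noei-struct}, which is exactly the dualization you carry out. Your parenthetical route for the first claim (division ring plus the coinvariant analogue of Theorem~\ref{noF2}) is literally the paper's argument; your primary route via the semiboolean summand from Theorem~\ref{struct} is an equally valid minor variant, and your expansion of the second claim via $g(\ker p)\subseteq\ker p$ correctly spells out what ``this would make $M$ $\mathcal X$-endomorphism coinvariant'' means in the cover setting.
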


We are now going to apply the results above obtained to the special case in which $\mathcal X$ is the class of projective modules. Our first result is a dual of Theorem \ref{thm:main}.

\begin{thm}\label{dual-thm:main} Let $R$ be a right perfect ring and $M$, an automorphism-coinvariant right $R$-module with a projective cover $\pi: P\to M$ such that $M$ is not quasi-projective. Assume that $M$ is indecomposable with finite dual Goldie dimension. Then
\begin{enumerate}
\item $\End(M)/J(\End(M))\cong \mathbb{F}_2.$
\item There exists an $n\geq 2$ and a set $\{P_i\}_{i=1}^n$ of non-isomorphic indecomposable projective modules such that $P=\oplus_{i=1}^n P_i$ and $\End_R(P_i)/J(\End(P_i))\cong \mathbb{F}_2$ for every $i=1,\ldots,n$.
\end{enumerate}
\end{thm}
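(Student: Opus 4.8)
The plan is to dualize the proof of Theorem~\ref{thm:main} step by step, replacing injective envelopes by projective covers and using the results already established for the coinvariant setting. Since $R$ is right perfect, every module has a projective cover and the endomorphism ring of a projective module is a semiperfect-type ring for which $\End(P)/J(\End(P))$ is von Neumann regular right self-injective with idempotents lifting modulo the radical; thus the hypotheses of Theorem~\ref{struct}, Theorem~\ref{noF2}, and Theorem~\ref{dual-noei-struct} are all available when $\mathcal X$ is the class of projective modules.

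First I would invoke Theorem~\ref{dual-noei-struct}: since $M$ is indecomposable, $\mathcal X$-automorphism coinvariant but not $\mathcal X$-endomorphism coinvariant (i.e.\ not quasi-projective), we immediately obtain $\End(M)/J(\End(M))\cong \mathbb F_2$, which is part~(1), and also that $\End(P)/J(\End(P))$ has a homomorphic image isomorphic to $\mathbb F_2\times \mathbb F_2$, a fact I will use to force $n\geq 2$. Next I would apply the dual decomposition theorem (the coinvariant half of Theorem~\ref{struct}) to write $M = N\oplus L$ with $N$ semiboolean and $L$ quasi-projective; since $M$ is indecomposable and not quasi-projective this forces $M = N$, so $M$ is semiboolean and, by the proof of Theorem~\ref{struct}, $\End(P)/J(\End(P))$ is a Boolean ring.

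Then I would analyze the projective cover $P$. Because $M$ has finite dual Goldie dimension and $R$ is right perfect, $P$ is a finite direct sum of indecomposable projective modules (local modules), say $P = P_1\oplus\cdots\oplus P_n$. Since $\End(P)/J(\End(P))$ is Boolean, it contains no copy of $M_2(\mathbb F_2)$, so no two of the $P_i$ can be isomorphic; hence the $P_i$ are pairwise non-isomorphic. For each $i$, $P_i$ is indecomposable projective over a right perfect ring, so $\End(P_i)/J(\End(P_i))$ is a division ring; being a factor of the Boolean ring $\End(P)/J(\End(P))$ (via a corner), it must be $\mathbb F_2$. Finally, the fact from Theorem~\ref{dual-noei-struct} that $\End(P)/J(\End(P))\cong \prod_{i=1}^n \End(P_i)/J(\End(P_i)) \cong \mathbb F_2^{\,n}$ has $\mathbb F_2\times\mathbb F_2$ as a homomorphic image forces $n\geq 2$, completing~(2).

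The main obstacle I anticipate is justifying cleanly that the hypotheses on $\End(P)/J(\End(P))$ hold for a projective cover $P$ over a right perfect ring — i.e.\ that it is von Neumann regular, right self-injective, with idempotents lifting — and that the identification $\End(M)/J(\End(M))\hookrightarrow \End(P)/J(\End(P))$ together with the product decomposition $\End(P)/J(\End(P))\cong\prod \End(P_i)/J(\End(P_i))$ behaves exactly as in the injective case. These facts are essentially packaged in \cite{GKS} and in the earlier statements of this section, so the argument is a matter of careful citation rather than new work; the rest is a routine transcription of the proof of Theorem~\ref{thm:main}.
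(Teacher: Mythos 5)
Your proposal is correct and follows essentially the same route as the paper's proof: Theorem~\ref{dual-noei-struct} for part (1) and for forcing $n\geq 2$, Theorem~\ref{struct} to reduce to $M=N$ semiboolean, finite dual Goldie dimension plus right perfectness to split $P$ into finitely many indecomposable projectives, and the Boolean structure of $\End(P)/J(\End(P))$ to get pairwise non-isomorphic summands with $\End(P_i)/J(\End(P_i))\cong\mathbb{F}_2$. The only cosmetic difference is that you deduce non-isomorphism of the $P_i$ directly from Booleanness of $\End(P)/J(\End(P))$, whereas the paper phrases it via square-freeness of the semiboolean module $M$; these are equivalent, and your explicit flagging of the \cite{GKS} hypotheses on $\End(P)$ over a right perfect ring is exactly the citation the paper implicitly relies on.
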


\begin{proof}
By Lemma \ref{dual-noei-struct}, $\End(M)/J(\End(M))\cong \mathbb{F}_2$.
By Theorem~\ref{struct}, we know that $M=N\oplus L$, where $N$ is a semiboolean module and $L$, a quasi-projective module. As we are assuming that $M$ is indecomposable and non quasi-projective, we deduce that $M=N$ is a semiboolean module. On the other hand, as $R$ is right perfect, the projective cover $P$ of $M$ is a direct sum of indecomposable direct summands. Moreover, as $M$ has finite dual Goldie dimension and $M$ is square-free (because it is semiboolean and idempotents in $\End_R(M)/J(\End_R(M))$ lift to idempotents of $\End(M)$ \cite{GKS}), we deduce that there exists a finite set of non-isomorphic indecomposable projective modules $\{P_i\}_{i=1}^n$ such that $P=\oplus_{i=1}^n P_i$. Now, $\End_R(P)/J(\End_R(P))\cong \sqcap_{i=1}^n \End_R(P_i)/J(\End_R(P_i))= \sqcap_{i=1}^n D_i$, where each $D_i=\End_R(P_i)/J(\End_R(P_i)$ is a division ring.
By the proof of Theorem~\ref{struct}, we have that $P$ is also a semiboolean module and this means that any element in $\End_R(P)/J(\End_R(P))$ is idempotent and therefore, each $D_i\cong \mathbb{F}_2$. Finally, $n\geq 2$ by Theorem~\ref{dual-noei-struct}.
\end{proof}

We can now adapt the arguments in Corollary~\ref{local} to first show:

\begin{cor}\label{local-co}
Any automorphism-coinvariant module over a commutative local perfect ring is quasi-projective.
\end{cor}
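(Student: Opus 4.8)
The plan is to argue by contradiction, exactly mirroring the proof of Corollary~\ref{local}, but using Theorem~\ref{dual-thm:main} in place of Proposition~\ref{fbn}. So suppose $R$ is a commutative local perfect ring and $M$ is an automorphism-coinvariant $R$-module which is not quasi-projective. Since $R$ is perfect, $M$ has a projective cover $\pi\colon P\to M$. By Theorem~\ref{struct} applied to the projective-cover setting, $M=N\oplus L$ with $N$ semiboolean and $L$ quasi-projective, and since the summand $L$ is quasi-projective while $M$ is not, $N\neq 0$; moreover $N$ is itself automorphism-coinvariant and not quasi-projective. Thus we may replace $M$ by $N$ and assume from the outset that $M$ is semiboolean.

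The next step is to reduce to the indecomposable case so that Theorem~\ref{dual-thm:main} applies. Since $R$ is perfect, $\End_R(M)/J(\End_R(M))$ is a von Neumann regular ring (Theorem~\ref{struct}), and as $M$ is semiboolean this quotient is Boolean, hence $M$ is square-free; combined with the fact that $M$ has a projective cover over a perfect ring and idempotents lift modulo $J(\End_R(M))$, one gets that $M$ decomposes into indecomposable summands. If every indecomposable summand were quasi-projective, then — over a right perfect ring a direct sum of automorphism-coinvariant modules that is again automorphism-coinvariant has relatively projective summands, so the sum of quasi-projective relatively projective modules is quasi-projective — $M$ itself would be quasi-projective, a contradiction. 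Hence some indecomposable direct summand $M_0$ of $M$ is automorphism-coinvariant and not quasi-projective, and being a direct summand of a module with a projective cover over a perfect ring, $M_0$ itself has finite dual Goldie dimension (indeed $M_0$ is an indecomposable summand, so its projective cover is indecomposable? — no, that is exactly what we must not assume). We keep only that $M_0$ has a projective cover and finite dual Goldie dimension.

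Now apply Theorem~\ref{dual-thm:main} to $M_0$: there is an $n\geq 2$ and a set $\{P_i\}_{i=1}^n$ of pairwise non-isomorphic indecomposable projective $R$-modules with $P_0=\bigoplus_{i=1}^n P_i$, the projective cover of $M_0$. Over a commutative perfect (hence semiperfect) ring $R$, the indecomposable projective modules are, up to isomorphism, exactly the $eR$ for $e$ running over a complete set of primitive idempotents, and their isomorphism classes are in bijection with the maximal ideals of $R$ via $eR\mapsto eR/eJ(R)$. But $R$ is local, so $R$ has a unique maximal ideal and therefore, up to isomorphism, a unique indecomposable projective module. This contradicts the existence of $n\geq 2$ pairwise non-isomorphic indecomposable projective modules $P_1,\dots,P_n$. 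Hence no such $M$ exists, and every automorphism-coinvariant $R$-module is quasi-projective.

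The main obstacle is the reduction to an indecomposable module with finite dual Goldie dimension to which Theorem~\ref{dual-thm:main} genuinely applies: one must be careful that passing to an indecomposable direct summand of $M$ preserves both "automorphism-coinvariant" and "has a projective cover" and "finite dual Goldie dimension." The first is clear (a direct summand of an automorphism-coinvariant module with a projective cover is again such, by the projective-cover analogue of the argument used in Proposition~\ref{local-ai}, or directly from \cite{GKS}); for the second and third, over a right perfect ring every module has a projective cover and the square-free semiboolean module $N$ has $\End(N)/J(\End(N))$ Boolean, which forces the decomposition of $N$ into indecomposables, each of which automatically has finite (in fact, appropriately bounded) dual Goldie dimension inherited from the local structure — this is where the perfectness and the Boolean quotient do the real work.
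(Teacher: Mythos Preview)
Your overall contradiction strategy is sound, and the endgame---a local ring has only one isomorphism class of indecomposable projective, contradicting $n\geq 2$---is exactly the right idea. The problem is the middle: the reduction to an \emph{indecomposable} summand $M_0$ with \emph{finite dual Goldie dimension}, which you need in order to invoke Theorem~\ref{dual-thm:main}. You yourself flag this as ``the main obstacle'' and then wave at ``perfectness and the Boolean quotient do the real work,'' but neither claim is justified. There is no reason a semiboolean module over a perfect ring must decompose into indecomposables, and even if it did, an indecomposable summand need not have finite dual Goldie dimension (its projective cover could a priori be $R^{(I)}$ with $I$ infinite, giving $M_0/\mathrm{rad}(M_0)\cong (R/J(R))^{(I)}$ of infinite dual Goldie dimension). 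So Theorem~\ref{dual-thm:main} simply does not apply at that point, and the argument stalls.

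The fix is to stop trying to make the \emph{module} indecomposable and instead observe that the \emph{projective cover} already is. After passing to the semiboolean summand $N$ via Theorem~\ref{struct}, the proof of that theorem shows its projective cover $P_N$ is again semiboolean, hence $\End(P_N)/J(\End(P_N))$ is Boolean and $P_N$ is square-free. Since $R$ is perfect, $P_N$ is a direct sum of indecomposable projectives; square-freeness forces these to be pairwise non-isomorphic; and since $R$ is local there is only one isomorphism class, so $P_N\cong R$ or $P_N=0$. Now Lemma~\ref{dual-indecomp-1} (not Theorem~\ref{dual-thm:main}) gives that $N$ is quasi-projective. Equivalently, and even more directly: for \emph{any} automorphism-coinvariant $M$ over a local perfect ring the cover is $P\cong R^{(I)}$; if $|I|\leq 1$ apply Lemma~\ref{dual-indecomp-1}, while if $|I|\geq 2$ then $\End(P)/J(\End(P))\cong \End_D(D^{(I)})$ with $D=R/J(R)$ a field, every element of which is a sum of two units, so $M$ is automatically endomorphism-coinvariant. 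Either route avoids the unjustified reduction and is what ``adapt the arguments in Corollary~\ref{local}'' is pointing at.
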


This extends to any commutative perfect ring as shown below. 

\begin{thm}\label{dual-perfect}
Let $R$ be a commutative perfect ring. Then any automorphism-coinvariant module over $R$ is quasi-projective.
\end{thm}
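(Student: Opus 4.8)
The plan is to reduce the commutative perfect case to the commutative local perfect case already handled in Corollary~\ref{local-co}, by exploiting the structure theory of commutative perfect rings. Recall that a commutative perfect ring $R$ is, by a theorem of Bass, a finite direct product $R=R_1\times\cdots\times R_k$ of (commutative) perfect local rings. This decomposition is the central tool.

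First I would observe that since $R=R_1\times\cdots\times R_k$, the category of $R$-modules decomposes as the product of the categories of $R_i$-modules: every $R$-module $M$ splits as $M=M_1\oplus\cdots\oplus M_k$ with $M_i$ an $R_i$-module, and this decomposition is compatible with projective covers, since a projective cover of $M$ is the direct sum of projective covers $\pi_i:P_i\to M_i$ taken over the $R_i$ (here $P_i$ is $R_i$-projective, hence $R$-projective). Next I would check that $M$ is automorphism-coinvariant over $R$ if and only if each $M_i$ is automorphism-coinvariant over $R_i$: an automorphism of $P=\oplus P_i$ that is "diagonal" restricts to automorphisms of each $P_i$, giving one direction, and conversely any automorphism of $P$ must preserve each $P_i$ because the $P_i$ are the $R_i$-components and the action of the orthogonal central idempotents $e_i\in R$ is preserved by every $R$-endomorphism of $P$. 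The same remark applies to quasi-projectivity: $M$ is quasi-projective over $R$ if and only if each $M_i$ is quasi-projective over $R_i$, since endomorphisms of $P$ also respect the $R_i$-components.

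With these reductions in place, the argument finishes quickly. Each $R_i$ is a commutative perfect local ring, so by Corollary~\ref{local-co} every automorphism-coinvariant $R_i$-module is quasi-projective. Applying this to each $M_i$ and reassembling, $M=\oplus_{i=1}^k M_i$ is quasi-projective over $R$. Alternatively, one could mimic the proof of Corollary~\ref{comm-nonF2}: if $M$ were automorphism-coinvariant but not quasi-projective, then using Theorem~\ref{struct} we could pass to a semiboolean, non quasi-projective, automorphism-coinvariant direct summand $N$, apply the dual of Proposition~\ref{fbn} to extract two non-isomorphic simple quotients each with endomorphism ring $\mathbb{F}_2$, and thereby produce a homomorphic image of $R$ isomorphic to $\mathbb{F}_2\times\mathbb{F}_2$; but a commutative perfect local ring has no such image, and the product decomposition again localizes the contradiction to a single factor $R_i$.

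The main obstacle is verifying that the automorphism-coinvariance (and quasi-projectivity) genuinely localizes across the product decomposition $R=\prod R_i$ — that is, that every automorphism of the projective cover $P$, not merely the block-diagonal ones, respects the $R_i$-decomposition. This is where one must use that the idempotents $e_i$ are central in $R$, so that the summand $P_i=Pe_i$ is a fully invariant submodule of $P$ under $\End_R(P)$; granting that, the reduction is clean and the rest is routine. (One should also confirm that the projective cover over $R$ restricts to projective covers over the $R_i$, which follows from the fact that $J(R)=\prod J(R_i)$ and superfluousness is checked componentwise.)
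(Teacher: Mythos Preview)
Your proposal is correct and follows essentially the same route as the paper: decompose the commutative perfect ring as a finite product of local (perfect) rings, split $M$ accordingly, apply Corollary~\ref{local-co} to each factor, and reassemble. The paper's proof is terser and simply asserts that $M$ decomposes into automorphism-coinvariant $R_i$-modules, whereas you supply the justification (via central idempotents and fully invariant summands) that the paper omits.
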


\begin{proof}
Let $M$ be an automorphism-coinvariant module over $R$. Being a commutative (semi)perfect ring, $R$ is a finite direct product of local rings, say $R=\sqcap_{i=1}^n R_i$. Then $M$ is a direct product $M=\sqcap_{i=1}^n M_i$, where each $M_i$ is an automorphism-coinvariant $R_i$-module. Therefore, each $M_i$ is a quasi-projective $R_i$-module by the above corollary. And thus, $M=\sqcap_{i=1}^n M_i$ is a quasi-projective $\sqcap_{i=1}^n R_i$-module.
\end{proof}

\begin{rem}
Observe that the same arguments used in the proofs show that the statements of Theorem~\ref{dual-thm:main} and Theorem~\ref{dual-perfect} are also valid if we replace ``(right) perfect ring" and ``(right) module" by ``semiperfect ring" and ``finitely generated (right) module", respectively.

Let us finally note that Example~\ref{no-qp} shows that over a noncommutative perfect ring, an automorphism-coinvariant module does not need to be quasi-projective.
\end{rem}

\bigskip

{\bf Acknowledgment.} We would like to thank the referee for several suggestions that improved an earlier version of Theorem~\ref{comm-noetherian}. 

\bigskip

\end{document}